\begin{document}

\setlength{\parskip}{0.3\baselineskip}

\newtheorem{theorem}{Theorem}
\newtheorem{corollary}[theorem]{Corollary}
\newtheorem{lemma}[theorem]{Lemma}
\newtheorem{proposition}[theorem]{Proposition}
\newtheorem{definition}[theorem]{Definition}
\newtheorem{remark}[theorem]{Remark}
\renewcommand{\thefootnote}{\alph{footnote}}
\newenvironment{proof}{\smallskip \noindent{\bf Proof}: }{\hfill $\Box$\hspace{1in} \medskip \\ }


\newcommand{\beqaa}{\begin{eqnarray}}
\newcommand{\eeqaa}{\end{eqnarray}}
\newcommand{\beqae}{\begin{eqnarray*}}
\newcommand{\eeqae}{\end{eqnarray*}}


\newcommand{\sii}{\Leftrightarrow}
\newcommand{\imer}{\hookrightarrow}
\newcommand{\imerc}{\stackrel{c}{\hookrightarrow}}
\newcommand{\Con}{\longrightarrow}
\newcommand{\con}{\rightarrow}
\newcommand{\conf}{\rightharpoonup}
\newcommand{\confe}{\stackrel{*}{\rightharpoonup}}
\newcommand{\pbrack}[1]{\left( {#1} \right)}
\newcommand{\sbrack}[1]{\left[ {#1} \right]}
\newcommand{\key}[1]{\left\{ {#1} \right\}}
\newcommand{\dual}[2]{\langle{#1},{#2}\rangle}

\newcommand{\R}{{\mathbb R}}
\newcommand{\N}{{\mathbb N}}
\newcommand{\cred}[1]{\textcolor{red}{#1}}

\title{\bf Stability and Regularity the MGT-Fourier Model with Fractional Coupling}
\author{Filomena Barbosa Rodrigues Mendes\\
{\small Department of Engenhary Electric, The  Federal University of Technological of Paran\'a, Brazil}\\Fredy M.  Sobrado  Su\'arez$^*$ \quad and \quad
Santos Richard  W.  Sanguino Bejarano \\
{\small Department of Mathematics,  Federal University of Technological  of Paran\'a, Brazil}
}
\date{}
\maketitle

\let\thefootnote\relax\footnote{$^*$ corresponding author:{\it e-mail:}   {\rm fredy@utfpr.edu.br} (Fredy Maglorio Sobrado  Su\'arez)}.


\begin{abstract}
In this work, we study the stability and regularity of the system formed by the third-order vibration equation in Moore-Gilson-Thompson time
coupled with the classical heat equation with Fourier's law.  We consider fractional couplings. He
the fractional coupling is given by: $\eta A^\phi\theta, \alpha\eta A^\phi u_{tt}$ and $\eta A^\phi u_t$, where the operator $A^\phi$ is self-adjoint and strictly positive
in a complex Hilbert space $H$ and the parameter $\phi$ can vary between $0$
and $1$. When $\phi=1$ we have the MGT-Fourier physical model, previously investigated, see; 2013\cite{ABMvFJRSV2013} and 2022\cite{DellOroPata2022}, in these works the authors respectively showed that the semigroup $S(t) = e^{t\mathbb{B}}$
associated with the MGT-Fourier model are exponentially stable
and analytical.  The model abstract of this research is given by:  \eqref{Eq1.1}--\eqref{Eq1.3}, we show directly that the semigroup $S(t)$ is exponentially stable for $\phi \in [0,1]$, we also show that for $\phi=1$, $S(t)$ is analytic and   study of the Gevrey classes of $S(t)$ and we show that for $\phi\in (\frac{1}{2}, 1)$ there are two families of Gevrey classes: $s_1>2$ when $\phi\in(1/2,2/3]$ and $s_2>\frac{\phi}{2\phi-1}$ when $\phi\in[2/3,1)$,   in the last part of our investigation using spectral analysis we tackled the study of the non-analyticity and lack of Gevrey classes of $S(t)$ when $\phi \in[0,1/2]$. For the study of the existence,  stability, and regularity,  semigroup theory is used together with the techniques of the frequency domain, multipliers, and spectral analysis of system, using proprety  of the fractional operator  $A^\phi$  for $\phi\in[0,1]$.
\end{abstract}

\bigskip
{\sc keyword:} Asymptotic behavior,  Stability, Regularity,   Gevrey Class,  Analyticity,  MGT-Fourier System.

\setcounter{equation}{0}

\section{Introduction}


Various researchers year after year have been devoting their attention to the  study of   asymptotic behaviour  the model  Moore-Gilson-Thompson  equation appearing in the context of acoustic wave propagation in viscous thermally relaxing fluids,    in \cite{BKILasieckaRM2011},  they studied the model abstract:
\begin{equation*}
\tau u_{ttt}+\alpha u_{tt}+c^2 Au+bAu_t.
\end{equation*}   
where $A$ be a selfadjoint positive operator on $\mathbb{H}$ with a dense domain $\mathfrak{D}(A)\subset \mathbb{H}$, show  exponential stability requires  $\gamma\equiv \alpha-\frac{\tau c^2}{b}>0$.  In the complementary region of the
parameters the system is unstable ($\gamma < 0$) or marginally stable ( $\gamma = 0$).  Quite interestingly, it can be used as a model for the vibrations in a standard linear viscoelastic solid, for it can be obtained by differentiating in time the equation of viscoelasticity with an exponential kernel (see \cite{2017OroPata} for more details). 

In \cite{LasieckaWang2015}, studies the decay of the energy of the Moore-Gibson-Thompson (MGT) equation with a viscoelastic term, this work is a generalization of the previous one (Part I) see \cite{LasieckaWang2016},  in the sense that it allows the kernel memory to be more general and shows that power decays the same way kernel memory does, exponentially or not. The model is given by
\begin{equation}
\tau u_{ttt}+\alpha u_{tt}+c^2 Au+bAu_t-\int_0^t g(t-s)Au(s)ds=0,
\end{equation}
where  $A$ is a positive seft-adjoint operator defined in a real Hilbert space $H$. The convolution term $\int_0^t g(t-s)Au(s)ds$ reflects the memory effect of  viscoelastic materials; the ``memory kernel" $g(t)\colon [0,\infty)\to [0,\infty)$ directly relates to whether or how the energy decays. Without this memory term, it is known the MGT equation has exponential energy decay in the non-critical regime, were $\gamma=\alpha-\frac{c^2\tau}{b}>0$, see \cite{BKILasieckaRM2011}.

 One of the first investigations of the coupled MGT-Fourier model, was published in 2013,  see  Alves et al. \cite{ABMvFJRSV2013} studied the exponential decay of the MGT-Fourier physical model given by
\begin{equation*}
\left\{\begin{array}{c}
\alpha u_{ttt}+u_{tt}-a^2\Delta u-a^2\beta\Delta u_t+\eta\Delta \theta=0.\\
\theta_t-\Delta\theta-\alpha\eta\Delta u_{tt}-\eta\Delta u_t=0.
\end{array}\right.
\end{equation*}
where $x\in\Omega$, $t\in (0,\infty)$. The function $u=u(x,t)$ represents the vibration of flexible structures, and $\theta=\theta(x,t)$ is the difference of temperature between the actual state and a reference temperature. The constants  $\alpha$ and $\beta$ are positive, $\eta$ is the coupling constant.  $\Omega$ is a bounded open connected set  in $\mathbb{R}^n(n\geq 1)$ having a smooth boundary $\Gamma=\partial\Omega$.  
The initial conditions are given by \eqref{Eq1.3} and the boundary conditions:
$u=0,  \qquad u_t=0,\qquad \theta=0,\qquad{\rm on}\quad \partial\Omega=\Gamma. $

In more recent research from 2022 \cite{DellOroPata2022} they studied the model  abstract MGT-Fourier system:
\begin{equation*}
\left\{\begin{array}{c}
 u_{ttt}+\alpha u_{tt}+\beta A^\rho u_t+\gamma A^\rho u =\eta A\theta,\\
\theta_t+\kappa A\theta=-\eta Au_{tt}-\eta\alpha Au_t,
\end{array}\right.
\end{equation*}
 in the subcritical regime $\beta>\frac{\gamma}{\alpha}$,  where  the operator $A$ is  strictly positive  selfadjoint.   For any fixed $\rho\in [1,2]$  the authors showed that the solution associated to the semigroup $S(t)$ is analytically and exponentially stable as well. Also in 2022 M. Conti et al \cite{ContiOLP2022}, studied the model:
\begin{equation}\label{Eq1.1Conti2022}
\left\{\begin{array}{ccc}
u_{ttt}+\alpha u_{tt}+\beta\Delta^2 u_t+\gamma\Delta^2 u=-\eta\Delta\theta\\
\theta_t-\kappa\Delta\theta=\eta\Delta u_{tt}+\alpha\eta\Delta u_t.
\end{array}\right.
\end{equation}
   His research is focused on the analysis of the energy transfer between the two equations, particularly when the first one is in the supercritical regime and exhibits an antidissipative character. The main actor then becomes the coupling constant $\eta$, governing the competition between Fourier damping and MGT antidamping. In fact, they showed that a large enough $\eta$ is always capable of stabilizing the system exponentially fast. One of the characteristics of this model is the presence of the bilaplacian in the first equation. With respect to the analogous model with the Laplacian, this forced to adjust the mathematical techniques. On the one hand, the energy estimation method does not seem to be applied directly, on the other hand, there is a gain in regularity that allows us to rely on analytical techniques for the properties of the semigroup associated with the model. In light of the above discussion, the natural question to be addressed is how the dissipation produced by the heat equation influences the asymptotic dynamics of the system. In the subcritical case, both equations in \eqref{Eq1.1Conti2022} are dissipative (actually exponentially stable) and good stabilization properties are expected. This idea was confirmed in \cite{DellOroPata2022}, where it is shown that if $\mu=\gamma-\alpha\beta< 0$ then the semigroup associated to \eqref{Eq1.1Conti2022} is exponentially stable and also analytic. This means that the coupling allows not only a dissipation transfer between the equations, but also a regularity transfer. This research deals with the behavior of the system in the critical and supercritical regimes, that is, when $\mu \geq 0$. In these cases, the system \eqref{Eq1.1Conti2022} consists of a conservative system (if $\mu = 0$) or antidissipative (if $\mu > 0$). More recent research in this direction can be found at \cite{ContiLPata2021, ContiPPQ2020,GamboaNOVVP2017,Quintanilla2019}.
  
 In the last decade, the number of investigations focused on the regularity of the semigroups $S(t)$ associated with coupled systems has increased considerably. The interest is centered on Differentiability,  Gevrey class, and/or Analyticity,  among the model's considered systems with fractional couplings, different types of dissipation (internal, localized), and fractional dissipations. One of the motivations for this new research is that the analytic semigroups associated with linear systems imply that this type of semigroup admits Gevrey classes, which in turn implies the semigroup's differentiability. And depending on the regularity, the semigroup will have the best asymptotic behavior (Exponential Decay).
  More recent research in this direction can be found at \cite{AmmariShelTebou2022,   ContiLPata2022,  DOroJRPata2013,  HPFredy2019, KLiuH2021,  KLiuTebou2022,   LT1998A, Quintanilla2020,  HSLiuRacke2019, BrunaJMR2022,  LTebou2013,   Tebou2021}.

The paper is organized as follows. In section 2, we study the well-posedness of the system \eqref{Eq1.1}-\eqref{Eq1.3} through semigroup theory.  We leave our main contributions for the last two sections, in the third we dedicate to stability, and in the fourth to regularity.     in the section \eqref{3.0} we showed that the semigroup $S(t)=e^{\mathbb{B}t}$ is exponentially stable as $\phi\in[0,1]$.   In the last section \eqref{4.0} of the regularity, we studied the analyticity when $\phi=1$, determine two family of Gevrey classes: $s_1>2$ when $\phi\in (\frac{1}{2},\frac{2}{3}]$ and $s_2>\frac{\phi}{2\phi-1}$ when $\phi\in[\frac{2}{3},1)$. 
And when the parameter $\phi$ assumes values in the interval $[0,\frac{1}{2}]$ it shows that $S(t)$ does not admit Gevrey classes and is not analytic.   We prioritize direct proofs in which we use semigroup-theory characterization together with frequency domain methods, multiplier  and  spectral analysis of the fractional operator  $A^\phi$  for $\phi\in[0,1]$.

\section{ Well-posedness of the system}
In this section, we will use the semigroup theory to assure the existence and uniqueness of strong solutions for the system \eqref{Eq1.1}--\eqref{Eq1.3}  where the operator $A$ is defined to follow, 
let 
$A\colon \mathfrak{D}(A)\subset \mathbf{H}\to \mathbf{H}$ be a strictly positive selfadjoint operator with compact inverse on a complex Hilbert space $\mathbf{H}$,  with a continuous embedded domain $ \mathfrak{D}(A)\hookrightarrow\mathbf{H}$.  Consider  the abstract linear system 
\begin{eqnarray}
\label{Eq1.1}
\alpha u_{ttt}+u_{tt}+a^2A u+a^2\beta A u_t-\eta A^\phi\theta=0,\\
\label{Eq1.2}
\theta_t+A\theta+\alpha\eta A^\phi u_{tt}+\eta A^\phi u_t=0.
\end{eqnarray}
The initial conditions are given by
\begin{equation}
\label{Eq1.3}
u(x,0)=u_0(x),\quad u_t(x,0)=u_1(x),\quad u_{tt}(x,0)=u_2(x),\quad \theta(x,0)=\theta_0(x),\; {\rm in}\; \Omega.
\end{equation}
\begin{remark} It is known that this operator $A$ is strictly positive,   selfadjoint,    has a compact inverse, and has compact resolvent.  And the operator $A^{\tau}$ is self-adjoint positive for all $\tau\in\R$, bounded for $\tau\leq 0$, and  the embedding
\begin{equation*}
\mathfrak{D}(A^{\tau_1})\hookrightarrow \mathfrak{D}(A^{\tau_2}),
\end{equation*}
is continuous for $\tau_1>\tau_2$.  Here,  the norm in $\mathfrak{D}(A^{\tau})$ is given by $\|u\|_{\mathfrak{D}(A^{\tau})}:=\|A^{\tau}u\|$, $u\in \mathfrak{D}(A^{\tau})$, where $\dual{\cdot}{\cdot}$ and $\|\cdot\|$ denotes the inner product and norm in the complex Hilbert space $\mathbf{H}=\mathfrak{D}(A^0)$.  Some of the most used spaces at work are  $\mathfrak{D}(A^\frac{1}{2})$ and $\mathfrak{D}(A^{-\frac{1}{2}})$.
\end{remark}
Taking $v=\alpha u_t+u$, the system \eqref{Eq1.1}-\eqref{Eq1.2} can be rewritten  as
\begin{eqnarray}
\label{Eq1.5}
v_{tt}+a^2A v-a^2(\alpha-\beta)A u_t-\eta A^\phi\theta=0,\\
\label{Eq1.6}
\theta_t+A\theta+\eta A^\phi v_t=0.
\end{eqnarray}
This system \eqref{Eq1.5}-\eqref{Eq1.6}, for $\phi=1$ is the model of a system of the coupled viscoelastic equation coupled with the heat equation given by Fourier's law.
\\
Taking the duality product between equation\eqref{Eq1.5} and  $v_t$,   and \eqref{Eq1.6} with $\theta$,   taking advantage of the self-adjointness of the powers of the operator $A$ and using the identity $v=\alpha u_t+u$,   Let $\beta-\alpha>0$, for every solution of the system \eqref{Eq1.1}-\eqref{Eq1.3}  the total energy $\mathfrak{E}\colon \mathbb{R}^+\to\mathbb{R}^+$ is given in the $t$ by    
\begin{equation}\label{Energia01}
\mathfrak{E}(t)=\frac{1}{2}\bigg[ \|\alpha w+v\|^2+a^2\alpha(\beta-\alpha)\|A^\frac{1}{2}v\|^2
+ a^2\|\alpha A^\frac{1}{2} v+A^\frac{1}{2} u\|^2   +\|\theta\|^2  \bigg ]
\end{equation}
and satisfies
\begin{equation}\label{Dissipa01}
\dfrac{d}{dt}\mathfrak{E}(t)=-a^2(\beta-\alpha)\|A^\frac{1}{2}v\|^2-\|A^\frac{1}{2}\theta\|^2.
\end{equation}
Taking $u_t=v$ and $v_t=w$,   
 the initial boundary value problem \eqref{Eq1.1}-\eqref{Eq1.3} can be reduced to the following abstract initial value problem for a first-order evolution equation
 \begin{equation}\label{Fabstrata}
    \frac{d}{dt}U(t)=\mathbb{B} U(t),\quad    U(0)=U_0,
\end{equation}
 where    $U(t)=(u,v,w,\theta)^T$,  $U_0=(u_0,u_1,u_2,\theta_0)^T$  and   the operator $\mathbb{B}\colon \mathfrak{D}(\mathbb{B})\subset \mathbb{H}\to\mathbb{H}$ is given by
\begin{equation}\label{operadorAgamma}
 \mathbb{B}U:=\Big( v, w,  -\dfrac{1}{\alpha}\big [a^2Au+a^2\beta A v-\eta A^\phi\theta+ w\big ],  -\big[ \eta A^\phi v+\alpha\eta A^\phi w+A \theta\big ] \Big)^T,
\end{equation}
for $U=(u,v,w,\theta)^T$. This operator will be defined in a suitable subspace of the phase space
$$
\mathbb{H}:=[\mathfrak{D}(A^\frac{1}{2})]^2\times[\mathfrak{D}(A^0)]^2,
$$
it is a Hilbert space with the inner product
\begin{eqnarray*}
\dual{ U_1}{U_2}_\mathbb{H} & := & a^2\alpha(\beta-\alpha)\dual{A^\frac{1}{2}v_1}{A^\frac{1}{2}v_2}+a^2 \dual{A^\frac{1}{2}u_1+\alpha A^\frac{1}{2}v_1}{A^\frac{1}{2}u_2+\alpha A^\frac{1}{2}v_2}\\
&  & +\dual{v_1+\alpha w_1}{v_2+\alpha w_2} + \dual{\theta_1}{\theta_2}.
\end{eqnarray*}
for $U_i=(u_i, v_i, w_i,  \theta_i)\in \mathbb{H}$,  $i=1,2$  and induced norm
\begin{equation}\label{NORM}
\|U\|_\mathbb{H}^2:=a^2\alpha(\beta-\alpha)\|A^\frac{1}{2}v\|^2+a^2 \|A^\frac{1}{2}u+\alpha A^\frac{1}{2}v\|^2+\|v+\alpha w\|^2+\|\theta\|^2.
\end{equation}
In these conditions, we define the domain of $\mathbb{B}$ as
\begin{multline}\label{dominioB}
    \mathfrak{D}(\mathbb{B}):= \Big \{ U\in \mathbb{H} \colon  (v,w)\in [\mathfrak{D}(A^\frac{1}{2})]^2,\; a^2u+a^2\beta v-\eta A^{\phi-1}\theta \in \mathfrak{D}(A), \\
    \eta A^{\phi-1} v+\alpha\eta A^{\phi-1}w+\theta \in \mathfrak{D}(A) \Big\}.
\end{multline}
To show that the operator $\mathbb{B}$ is the generator of a $C_0$-semigroup,  we invoke a result from Liu-Zheng' \cite{LiuZ}.

\begin{theorem}[see Theorem 1.2.4 in \cite{LiuZ}] \label{TLiuZ}
Let $\mathbb{B}$ be a linear operator with domain $\mathfrak{D}(\mathbb{B})$ dense in a Hilbert space $\mathbb{H}$. If $\mathbb{B}$ is dissipative and $0\in\rho(\mathbb{B})$, the resolvent set of $\mathbb{B}$, then $\mathbb{B}$ is the generator of a $C_0$-semigroup of contractions on $\mathbb{H}$.
\end{theorem}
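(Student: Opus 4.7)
The plan is to obtain the stated conclusion by showing that the two hypotheses (dissipativity together with $0\in\rho(\mathbb{B})$) suffice to verify the standard Lumer--Phillips criterion: namely, that a densely defined, dissipative operator on a Hilbert space that additionally satisfies a range condition $\mathrm{Range}(\lambda_0 I-\mathbb{B})=\mathbb{H}$ for some $\lambda_0>0$ generates a $C_0$-semigroup of contractions. So the whole content of the proof reduces to extracting the range condition from the assumption $0\in\rho(\mathbb{B})$.

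First I would record that $0\in\rho(\mathbb{B})$ forces $\mathbb{B}^{-1}\in\mathcal{L}(\mathbb{H})$, and hence $\mathbb{B}$ is closed (the graph of $\mathbb{B}$ is the image of the bounded graph of $\mathbb{B}^{-1}$ under coordinate flip). Next I would invoke the elementary fact that the resolvent set $\rho(\mathbb{B})$ of a closed operator is open in $\mathbb{C}$. Since $0\in\rho(\mathbb{B})$, there exists $\varepsilon>0$ such that $\{\lambda\in\mathbb{C}:|\lambda|<\varepsilon\}\subset\rho(\mathbb{B})$; in particular one may pick a real $\lambda_0\in(0,\varepsilon)$, and then $\lambda_0 I-\mathbb{B}\colon\mathfrak{D}(\mathbb{B})\to\mathbb{H}$ is a bijection, which gives the desired range condition.

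With these ingredients in hand I would combine dissipativity with the range condition to verify the Lumer--Phillips hypotheses. Dissipativity, $\mathrm{Re}\dual{\mathbb{B}x}{x}\leq 0$ for $x\in\mathfrak{D}(\mathbb{B})$, gives for any $\lambda>0$ the estimate
\begin{equation*}
\|(\lambda I-\mathbb{B})x\|\,\|x\|\geq \mathrm{Re}\dual{(\lambda I-\mathbb{B})x}{x}\geq \lambda\|x\|^2,
\end{equation*}
so $\|(\lambda I-\mathbb{B})x\|\geq\lambda\|x\|$. Together with surjectivity at $\lambda=\lambda_0$ this yields the resolvent bound $\|(\lambda_0 I-\mathbb{B})^{-1}\|\leq 1/\lambda_0$. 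Density of $\mathfrak{D}(\mathbb{B})$ in $\mathbb{H}$ is in the hypotheses, so Lumer--Phillips applies and concludes the proof.

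The only delicate point is really the appeal to the openness of $\rho(\mathbb{B})$; everything else is formal. One could instead prove openness by hand via the Neumann series $(\lambda I-\mathbb{B})^{-1}=\mathbb{B}^{-1}(I-\lambda\mathbb{B}^{-1})^{-1}$, which converges for $|\lambda|<\|\mathbb{B}^{-1}\|^{-1}$; this gives an explicit $\varepsilon$ and makes the step above fully self-contained. Once one sees this, the theorem is a clean packaging of Lumer--Phillips in a form that is particularly convenient for evolution systems such as \eqref{Fabstrata}, where the invertibility of $\mathbb{B}$ can typically be checked by a direct elliptic-type computation on the components of $U$.
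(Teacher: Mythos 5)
Your argument is a correct, self-contained proof of the abstract statement: you reduce to the Lumer--Phillips criterion by showing that $0\in\rho(\mathbb{B})$ propagates to some real $\lambda_0>0$ (via closedness of $\mathbb{B}$ and openness of the resolvent set, or equivalently the explicit Neumann series), and then combine the resulting range condition with the dissipativity estimate $\|(\lambda I-\mathbb{B})x\|\,\|x\|\geq\mathrm{Re}\dual{(\lambda I-\mathbb{B})x}{x}\geq\lambda\|x\|^2$. The only blemish is a sign: the factorization should read $(\lambda I-\mathbb{B})^{-1}=-\mathbb{B}^{-1}(I-\lambda\mathbb{B}^{-1})^{-1}$, which does not affect the convergence radius $|\lambda|<\|\mathbb{B}^{-1}\|^{-1}$. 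The paper, however, does something genuinely different with this statement: it does not prove the theorem at all, but quotes it from Liu--Zheng (Theorem 1.2.4), and the proof environment attached to it is entirely devoted to verifying the hypotheses for the concrete operator $\mathbb{B}$ of \eqref{operadorAgamma} --- the dissipativity identity \eqref{Eqdissipative}, which uses $\beta>\alpha$, and the condition $0\in\rho(\mathbb{B})$ via a Lax--Milgram argument for the sesquilinear form \eqref{Sesquilinear02} together with the boundedness of $\mathbb{B}^{-1}$. So your proposal supplies the abstract semigroup-generation machinery that the paper takes as known, while the paper's text supplies the concrete elliptic verification that your proposal only gestures at in its closing sentence; both ingredients are needed for the well-posedness of \eqref{Fabstrata}, and neither argument subsumes the other.
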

\begin{proof}
Let us see that the operator $\mathbb{B}$  given in \eqref{operadorAgamma} satisfies the conditions of this  theorem\eqref{TLiuZ}.  Clearly,  we see that $\mathfrak{D}(\mathbb{B})$ is dense in $\mathbb{H}$. Taking the inner product of $\mathbb{B}U$ with $U$,  we have
\begin{equation}\label{Eqdissipative}
\text{Re}\dual{\mathbb{B}U}{U}_\mathbb{H}= -a^2(\beta-\alpha)\|A^\frac{1}{2}v\|^2 -\|A^\frac{1}{2}\theta \|^2\leq 0, \quad\forall\ U\in \mathfrak{D}(\mathbb{B}),
\end{equation} 
and since $\beta>\alpha$,  the operator  $\mathbb{B}$ is dissipative.

To complete the conditions of the above theorem, it remains to show that $0\in \rho(\mathbb{B})$.  Therefore we must show that $(0I-\mathbb{B})^{-1}$ exists and is bounded in $\mathbb{H}$. We will first prove that $(0I-\mathbb{B})^{-1}$ exists, then it must be proved that $\mathbb{B}$ is bijective.  Here we are going to affirm that $\mathbb{B}$ is surjective,  then for all $F =
(f^1,f^2,f^3,f^4)^T\in \mathbb{H}$   the stationary problem $\mathbb{B} U = F$  has a solution for $ U = (u, v, w,\theta)^T\in \mathfrak{D}(\mathbb{B})$.  From definition of the operator $\mathbb{B}$  in \eqref{operadorAgamma},  this system can be written as:\\
$v  =  f^1,\qquad w  =  f^2$ \qquad  and 
\begin{eqnarray}
\label{Eq02LaxM}
 a^2 Au &= & -\eta A^{\phi-1}f_4-\eta^2A^{2\phi-1}f^1-\alpha\eta^2A^{2\phi-1}f^2 -\alpha f^3-a^2\beta Af^1-f^2,\\
\label{Eq03LaxM}
  A\theta & = & -f^4- \eta A^\phi f^1-\alpha\eta A^\phi f^2.
\end{eqnarray}
From these equations, this problem can be placed in a variational formulation:  $v, w\in [\mathcal{D}(A^\frac{1}{2})]^2$ and we write the last two equations \eqref{Eq02LaxM}-\eqref{Eq03LaxM} in a variational form,  using the  sesquilinear form $b$ :
\begin{equation}\label{Sesquilinear01}
b(u,\theta; \psi_1,\psi_2)=\langle g_1,g_2; \psi_1,\psi_2\rangle
\end{equation}
where $g_1=-\eta A^{\phi-1}f_4-\eta^2A^{2\phi-1}f^1-\alpha\eta^2A^{2\phi-1}f^2 -\alpha f^3-a^2\beta Af^1-f^2,  g_2=-f^4- \eta A^\phi f^1-\alpha\eta A^\phi f^2$ and the sesquilinear form $b$ is given by
\begin{equation}\label{Sesquilinear02}
b(u,\theta;\psi_1,\psi_2)=a^2\dual{A^\frac{1}{2}u}{A^\frac{1}{2}\psi_1}+\dual{A^\frac{1}{2}\theta}{A^\frac{1}{2}\psi_2}.
\end{equation}
as $\mathfrak{D}(A)\hookrightarrow\mathfrak{D}(A^\frac{1}{2})\hookrightarrow D(A^0)\hookrightarrow\mathfrak{D}(A^{-\frac{1}{2}})\hookrightarrow\mathfrak{D}(A^{-1})$,  this sesquilinear form is coercive in the space $[\mathfrak{D}(A^\frac{1}{2})]^2$.  As $g_1,g_2\in [ \mathfrak{D}(A^{-\frac{1}{2}})]^2$ from Lax-Milgram's Lemma the variational form has unique solution $(u,\theta)\in [\mathfrak{D}(A^\frac{1}{2})\times \mathfrak{D}(A^\frac{1}{2})]$     and it satisfies \eqref{Eq02LaxM}-\eqref{Eq03LaxM}, such that \eqref{Sesquilinear01} is verify for $(\psi_1,\psi_2)\in [\mathfrak{D}(A^\frac{1}{2})]^2$.  From \eqref{Sesquilinear02}  and  for all $(\psi_1,\psi_2)\in [\mathfrak{D}(A^\frac{1}{2})]^2$,   we have
\begin{multline}\label{Acotado01}
a^2\dual{A^\frac{1}{2} u}{A^\frac{1}{2}\psi_1}+\dual{A^\frac{1}{2}\theta}{A^\frac{1}{2}\psi_2}=\dual{-f^4- \eta A^\phi f^1-\alpha\eta A^\phi f^2}{\psi_2}\\
+\dual{-\eta A^{\phi-1}f_4-\eta^2A^{2\phi-1}f^1-\alpha\eta^2A^{2\phi-1}f^2 -\alpha f^3-a^2\beta Af^1-f^2}{\psi_1}, 
\end{multline}
choosing $\psi_2=0\in \mathfrak{D}(A^\frac{1}{2})$ from \eqref{Acotado01} and $\forall \psi_1\in\mathfrak{D}(A^\frac{1}{2})$,    we have
\begin{equation*} %
a^2\dual{A^\frac{1}{2} u}{A^\frac{1}{2}\psi_1}=\dual{-\eta A^{\phi-1}f_4-\eta^2A^{2\phi-1}f^1-\alpha\eta^2A^{2\phi-1}f^2 -\alpha f^3-a^2\beta Af^1-f^2}{\psi_1}, 
\end{equation*}
then
\begin{multline}\label{Acotado02}
a^2Au=-\eta A^{\phi-1} f_4-\eta^2A^{2\phi-1}f^1-\alpha\eta^2A^{2\phi-1}f^2 -\alpha f^3\\
-a^2\beta Af^1-f^2\quad{\rm in}\quad  [\mathfrak{D}(A^\frac{1}{2})]^\prime=\mathfrak{D}(A^{-\frac{1}{2}}).
\end{multline}
On the other hand,  choosing $\psi_1=0\in  \mathfrak{D}(A^\frac{1}{2})$ from \eqref{Acotado02} and $\forall \psi_2\in\mathfrak{D}(A^\frac{1}{2})$,    we have
$\dual{A\theta}{\psi_2}=\dual{-f^4- \eta A^\phi f^1-\alpha\eta A^\phi f^2}{\psi_2}$,  then
\begin{equation}\label{Acotado03}
A\theta =-f^4- \eta A^\phi f^1-\alpha\eta A^\phi f^2\quad{\rm in}\quad  [\mathfrak{D}(A^\frac{1}{2})]^\prime=\mathfrak{D}(A^{-\frac{1}{2}}).
\end{equation}
\eqref{Acotado02} and \eqref{Acotado03} are solutions in the weak sense.  From $F=(f^1, f^2, f^3,f^4)^T\in \mathbb{H}$ and  $0\leq\phi\leq 1$  implies that $-\frac{1}{2}\leq \frac{2\phi-1}{2}\leq \frac{1}{2}$,  we  have  $\mathfrak{D}(A^\frac{1}{2})\hookrightarrow\mathfrak{D}(A^\frac{2\phi-1}{2})\hookrightarrow\mathfrak{D}(A^{-\frac{1}{2}})\hookrightarrow\mathfrak{D}(A^\frac{1-2\phi}{2})\hookrightarrow \mathfrak{D}(A^\frac{1}{2})$,   from \eqref{Acotado02},  we have
\begin{multline}\label{Acotado04}
u=\dfrac{1}{a^2}\bigg[ -\eta A^{\phi-2} f_4-\eta^2A^{2\phi-2}f^1-\alpha\eta^2A^{2\phi-2}f^2 -\alpha A^{-1}f^3\\
-a^2\beta f^1-A^{-1}f^2  \bigg] \quad{\rm in}\quad \mathfrak{D}(A^\frac{1}{2}).
\end{multline}
Now  from $0\leq\phi\leq 1$  implies that $2\geq 2-\phi\geq 1\geq \frac{2-\phi}{2}\geq\frac{1}{2}$,  we  have  $\mathfrak{D}(A^\frac{1}{2})\hookrightarrow\mathfrak{D}(A^\frac{2-\phi}{2})\hookrightarrow\mathfrak{D}(A)\hookrightarrow\mathfrak{D}(A^0)\hookrightarrow \mathfrak{D}(A^{\phi-1})\hookrightarrow\mathfrak{D}(A^{-1})$,   from \eqref{Acotado03},  we have
\begin{equation}\label{Acotado05}
\theta = -A^{-1}f^4-\eta A^{\phi-1}f^1-\alpha\eta A^{\phi-1}f^2 \quad{\rm in}\quad \mathfrak{D}(A^\frac{1}{2}).
\end{equation}
then   $a^2u+a^2\beta v-\eta A^{\phi-1}\theta\in \mathfrak{D}(A)$ and $\eta A^{\phi-1}v+\alpha\eta A^{\phi-1}w+\theta\in \mathfrak{D}(A)$. The injectivity of $\mathbb{B}$ follows from the uniqueness given by the Lemma of Lax-Milgram's.    It remains to show that $\mathbb{B}^{-1}$ is a bounded operator.   From  $v=f^1$,  $w=f^2$ equations \eqref{Acotado04} and \eqref{Acotado05} and  \eqref{NORM},  we get
\begin{multline}
\|U\|_\mathbb{H}^2=\alpha a^2(\beta-\alpha)\|A^\frac{1}{2}f^1\|^2+a^2\|\alpha A^\frac{1}{2}f^1+\dfrac{1}{a^2}\big[ -\eta A^{\phi-\frac{3}{2}} f_4-\eta^2A^{2\phi-\frac{3}{2}}f^1-\alpha\eta^2A^{2\phi-\frac{3}{2}}f^2 \\-\alpha A^{-\frac{1}{2}}f^3
-a^2\beta A^\frac{1}{2}f^1-A^{-\frac{1}{2}}f^2  \big]\|^2+\|\alpha f^2+f^1\|^2\\
+\| -A^{-1}f^4-\eta A^{\phi-1}f^1-\alpha\eta A^{\phi-1}f^2\|^2
\end{multline}
Using norm $\|F\|_\mathbb{H}$  and applying  inequalities Cauchy-Schwarz, Young and applying continuous embedding $\mathfrak{D}(A^{\tau_2})\hookrightarrow  \mathfrak{D}(A^{\tau_1}), \;\tau_2 >\tau_1$,   after making various estimates,  we obtain
\begin{equation*}
\|U\|^2_\mathbb{H}=\| \mathbb{B}^{-1}F\|_\mathbb{H}^2\leq C\|F\|^2_\mathbb{H}.
\end{equation*}
Therefore  $\mathbb{B}^{-1}$ is bounded.  So we come to the end of the proof of this theorem.
\end{proof}
 As a consequence of the previous Theorem\eqref{TLiuZ},  we obtain
\begin{theorem}
Given $U_0\in\mathbb{H}$ there exists a unique weak solution $U$ to  the problem \eqref{Fabstrata} satisfying 
$$U\in C([0, +\infty), \mathbb{H}).$$
Futhermore,  if $U_0\in  \mathfrak{D}(\mathbb{B}^k), \; k\in\mathbb{N}$, then the solution $U$ of \eqref{Fabstrata} satisfies
$$U\in \bigcap_{j=0}^kC^{k-j}([0,+\infty),  \mathfrak{D}(\mathbb{B}^j).$$
\end{theorem}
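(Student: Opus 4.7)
The plan is to invoke the previous theorem, which establishes that $\mathbb{B}$ is dissipative with $0\in\rho(\mathbb{B})$ and has dense domain, so by Theorem \ref{TLiuZ} it generates a $C_0$-semigroup of contractions $S(t)=e^{\mathbb{B}t}$ on $\mathbb{H}$. The existence and uniqueness of the weak solution is then immediate: for any $U_0\in\mathbb{H}$, set $U(t):=S(t)U_0$. By the general properties of $C_0$-semigroups, $t\mapsto S(t)U_0$ is continuous from $[0,+\infty)$ into $\mathbb{H}$, and $U$ is the unique mild solution of \eqref{Fabstrata}. Uniqueness follows because any two solutions of the Cauchy problem would coincide with this mild representation.

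For the regularity part, I would use the standard invariance property of the domains of powers of the generator. Concretely, if $U_0\in\mathfrak{D}(\mathbb{B}^k)$, then since $\mathbb{B}$ commutes with $S(t)$ on $\mathfrak{D}(\mathbb{B}^k)$, one has $S(t)U_0\in\mathfrak{D}(\mathbb{B}^k)$ for every $t\geq 0$ and
\begin{equation*}
\frac{d^j}{dt^j}S(t)U_0=\mathbb{B}^j S(t)U_0=S(t)\mathbb{B}^j U_0,\qquad 0\leq j\leq k.
\end{equation*}
The right-hand side is continuous from $[0,+\infty)$ into $\mathbb{H}$ (as the orbit of $\mathbb{B}^j U_0\in\mathfrak{D}(\mathbb{B}^{k-j})$ under $S(t)$), and in fact continuous into $\mathfrak{D}(\mathbb{B}^{k-j})$ equipped with its graph norm, because $S(t)$ restricts to a $C_0$-semigroup on each $\mathfrak{D}(\mathbb{B}^m)$. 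This yields $\mathbb{B}^j S(\cdot)U_0\in C([0,+\infty),\mathfrak{D}(\mathbb{B}^{k-j}))$, i.e., $U\in C^{k-j}([0,+\infty),\mathfrak{D}(\mathbb{B}^j))$ for every $j=0,\ldots,k$, and hence $U$ belongs to the intersection $\bigcap_{j=0}^k C^{k-j}([0,+\infty),\mathfrak{D}(\mathbb{B}^j))$.

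There is really no hard step here: the proof is a direct consequence of Theorem \ref{TLiuZ} together with the general theory of $C_0$-semigroups (as, e.g., in Pazy's monograph). The only point requiring a brief justification is the iterative regularity, which follows by induction on $k$ once one notes that $S(t)$ leaves $\mathfrak{D}(\mathbb{B})$ invariant and satisfies $\frac{d}{dt}S(t)U_0=\mathbb{B}S(t)U_0=S(t)\mathbb{B}U_0$ for $U_0\in\mathfrak{D}(\mathbb{B})$.
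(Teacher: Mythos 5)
Your proposal is correct and follows exactly the route the paper intends: the paper states this theorem without proof as an immediate consequence of the generation result via Theorem \ref{TLiuZ}, and your argument simply fills in the standard $C_0$-semigroup details (mild solution $U(t)=S(t)U_0$, invariance of $\mathfrak{D}(\mathbb{B}^k)$, and the identity $\frac{d^j}{dt^j}S(t)U_0=S(t)\mathbb{B}^jU_0$). Nothing is missing.
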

\begin{theorem}[Lions' Interpolation]\label{Lions-Landau-Kolmogorov}  Let $\alpha<\beta<\gamma$. The there exists a constant $L=L(\alpha,\beta,\gamma)$ such that
\begin{equation}\label{ILLK}
\|A^\beta u\|\leq L\|A^\alpha u\|^\frac{\gamma-\beta}{\gamma-\alpha}\cdot \|A^\gamma u\|^\frac{\beta-\alpha}{\gamma-\alpha}
\end{equation}
for every $u\in\mathfrak{D}(A^\gamma)$.
\end{theorem}
\begin{proof}
See  Theorem  5.34 \cite{EN2000}.
\end{proof}
\section{Stability}\label{3.0}
\subsection{Exponential decay of the semigroup $S(t)=e^{t\mathbb{B}}$}
In this section, we will study the asymptotic behavior of the semigroup of the system \eqref{Eq1.1}-\eqref{Eq1.3}.  We will use the following spectral characterization of exponential stability of semigroups due to Gearhart\cite{Gearhart}(Theorem 1.3.2  book of Liu-Zheng \cite{LiuZ}).
\begin{theorem}[see \cite{LiuZ}]\label{LiuZExponential}
Let $S(t)=e^{t\mathbb{B}}$ be  a  $C_0$-semigroup of contractions on  a Hilbert space $ \mathbb{H}$. Then $S(t)$ is exponentially stable if and only if  
	\begin{equation}\label{EImaginario}
\rho(\mathbb{B})\supseteq\{ i\lambda/ \lambda\in \R \} 	\equiv i\R
\end{equation}
and
\begin{equation}\label{Exponential}
 \limsup\limits_{|\lambda|\to
   \infty}   \|(i\lambda I-\mathbb{B})^{-1}\|_{\mathcal{L}( \mathbb{H})}<\infty
\end{equation}
holds.
\end{theorem}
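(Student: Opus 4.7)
The stated result is the classical Gearhart--Pr\"uss--Huang characterization of exponential stability of $C_0$-semigroups on Hilbert spaces, cited here from Liu--Zheng. The plan is a Fourier-analytic proof of both implications.

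For necessity, suppose $\|S(t)\|\le Me^{-\omega t}$ for some $\omega>0$. The Laplace representation
\begin{equation*}
R(\lambda)x:=\int_0^{\infty}e^{-\lambda t}S(t)x\,dt
\end{equation*}
converges absolutely on $\{\operatorname{Re}\lambda>-\omega\}$ and agrees there with $(\lambda I-\mathbb{B})^{-1}$ on $\mathfrak{D}(\mathbb{B})$ by a standard calculation using the $C_0$-semigroup identity. Restricting to $\lambda\in i\R$ yields \eqref{EImaginario}, and the uniform bound $\|(i\lambda I-\mathbb{B})^{-1}\|\le M/\omega$ delivers \eqref{Exponential}.

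For sufficiency, set $M_0:=\sup_{\lambda\in\R}\|(i\lambda I-\mathbb{B})^{-1}\|<\infty$. Contractivity of $S(t)$ gives $\|(\lambda I-\mathbb{B})^{-1}\|\le 1/\operatorname{Re}\lambda$ on the open right half-plane. A Neumann-series expansion around each $i\lambda\in i\R$, with radius at least $1/M_0$, then extends the resolvent analytically into a strip $\{-\delta\le\operatorname{Re}\lambda\le 0\}$ with a uniform bound. Next, apply the $\mathbb{H}$-valued Plancherel theorem to $t\mapsto e^{-\epsilon t}S(t)U_0$ (extended by $0$ for $t<0$), whose Fourier transform at $\xi$ equals $((\epsilon+i\xi)I-\mathbb{B})^{-1}U_0$, to obtain
\begin{equation*}
\int_0^{\infty} e^{-2\epsilon t}\|S(t)U_0\|^2\,dt=\frac{1}{2\pi}\int_\R \|((\epsilon+i\xi)I-\mathbb{B})^{-1}U_0\|^2\,d\xi.
\end{equation*}
A resolvent-identity manipulation, or equivalently the passage from a single resolvent to the squared resolvent $((\epsilon+i\xi)I-\mathbb{B})^{-2}$ via $\tfrac{d}{d\xi}$, produces the extra $\xi$-decay needed to bound the right-hand side by $K\|U_0\|_\mathbb{H}^2$ uniformly in $\epsilon\in(0,\delta)$. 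Letting $\epsilon\downarrow 0$ through Fatou gives $\int_0^\infty\|S(t)U_0\|^2\,dt\le K\|U_0\|_\mathbb{H}^2$.

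The uniform $L^2$-integrability of all orbits is precisely Datko's criterion for exponential stability, delivering constants $M',\omega'>0$ with $\|S(t)\|\le M'e^{-\omega't}$, and the proof is complete. The genuine obstacle is the Plancherel step: the naive pointwise bound $\|R(\epsilon+i\xi)U_0\|\le M_0\|U_0\|$ is not square-integrable in $\xi$, so one must manufacture the integrability either from a squared-resolvent reformulation on a shifted contour or from a resolvent-difference identity. This is also where the Hilbert-space hypothesis enters essentially, since the statement is known to fail on general Banach spaces precisely through the absence of an $L^p$ Plancherel identity.
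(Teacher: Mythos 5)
First, a point of comparison: the paper does not prove this statement at all. It is quoted from Liu--Zheng (Theorem 1.3.2 there) as the known Gearhart--Pr\"uss--Huang characterization, and the paper only ever \emph{applies} it to the operator $\mathbb{B}$ of \eqref{operadorAgamma}. So there is no in-paper argument to measure yours against; what you have written is a proof of the cited background theorem itself.

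On its own terms, your proposal follows the standard route, and the necessity half is complete and correct. In the sufficiency half the skeleton (uniform bound $M_0$ on $i\R$, Neumann extension to a strip, Plancherel for $t\mapsto e^{-\epsilon t}S(t)U_0$, Datko) is the right one, but the single step that carries all the difficulty --- bounding $\int_\R\|((\epsilon+i\xi)I-\mathbb{B})^{-1}U_0\|^2\,d\xi$ by $K\|U_0\|^2$ uniformly in $\epsilon$ --- is asserted in one sentence, and one of the two mechanisms you offer for it would fail: passing to the squared resolvent $((\epsilon+i\xi)I-\mathbb{B})^{-2}$ only upgrades the pointwise bound from $M_0$ to $M_0^2$, which is still not square-integrable in $\xi$, so no ``extra $\xi$-decay'' is produced that way. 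The resolvent-identity mechanism does work, but it must be run in a specific direction which you should spell out: first apply Plancherel at $\operatorname{Re}\lambda=1$, where contractivity alone gives
\begin{equation*}
\int_\R\big\|((1+i\xi)I-\mathbb{B})^{-1}U_0\big\|^2\,d\xi=2\pi\int_0^\infty e^{-2t}\|S(t)U_0\|^2\,dt\le\pi\|U_0\|^2;
\end{equation*}
then use the identity $(i\xi I-\mathbb{B})^{-1}=\big[I+(i\xi I-\mathbb{B})^{-1}\big]((1+i\xi)I-\mathbb{B})^{-1}$ together with $\|(i\xi I-\mathbb{B})^{-1}\|\le M_0$ to transfer this to $\int_\R\|(i\xi I-\mathbb{B})^{-1}U_0\|^2\,d\xi\le(1+M_0)^2\pi\|U_0\|^2$; finally use $((\epsilon+i\xi)I-\mathbb{B})^{-1}=\big[I-\epsilon((\epsilon+i\xi)I-\mathbb{B})^{-1}\big](i\xi I-\mathbb{B})^{-1}$ and the contraction bound $\|((\epsilon+i\xi)I-\mathbb{B})^{-1}\|\le 1/\epsilon$ to obtain the uniform-in-$\epsilon$ estimate, and conclude with Fatou and Datko as you indicate. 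With that step filled in, your argument is a correct proof of the quoted theorem; as written, the decisive estimate is claimed rather than established.
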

\begin{remark}\label{EqvExponential}
 Note that to show the condition \eqref{Exponential} it is enough to show that: Let $\delta>0$. There exists a constant $C_\delta>0$ such that the solutions of the system \eqref{Eq1.1}-\eqref{Eq1.3} for $|\lambda|>\delta$,  satisfy the inequality 
 \begin{equation}\label{EqvExponencial}
 \dfrac{\|U\|_\mathbb{H}}{\|F\|_\mathbb{H}}\leq C_\delta\quad\Longleftrightarrow\quad \|U\|^2_\mathbb{H}\leq C_\delta\|F\|_\mathbb{H}\|U\|_\mathbb{H}<\infty.
 \end{equation}
 \end{remark}
In view this theorem\eqref{LiuZExponential}, we will try to obtain some estimates  for  the solution $U=(u, v, w, \theta)^T$ of the system $(i\lambda I-\mathbb{B})U=F$,   where  $\lambda\in \R$  and  $F=(f^1, f^2, f^3, f^4)^T\in \mathbb{H}$. This system, written in components,  reads
\begin{eqnarray}
i\lambda u-v &=& f^1\quad {\rm in}\quad \mathfrak{D}(A^\frac{1}{2})\label{Pesp-10}\\
i\lambda v-w &=& f^2\quad {\rm in}\quad \mathfrak{D}(A^\frac{1}{2})\label{Pesp-20}\\
i\alpha\lambda w+a^2Au+a^2\beta Av-\eta A^\phi \theta +w &=& \alpha f^3\quad {\rm in}\quad \mathfrak{D}(A^0) \label{Pesp-30}\\
i\lambda\theta+ \eta A^\phi v+\alpha\eta A^\phi w+A\theta &=& f^4 \quad {\rm in}\quad \mathfrak{D}(A^0) \label{Pesp-40}
\end{eqnarray}
From \eqref{Eqdissipative},   we have the first estimate
\begin{eqnarray*}
a^2(\beta-\alpha)\|A^\frac{1}{2} v\|^2+\|A^\frac{1}{2}\theta \|^2& = & |a^2(\beta-\alpha)\|A^\frac{1}{2} v\|^2+\|A^\frac{1}{2}\theta \|^2|=|-{\rm Re}\dual{\mathbb{B}U}{U}|\\
&= &|{\rm Re}\{i\lambda\|U\|^2-\dual{\mathbb{B}U}{U}\}|=|{\rm Re}\{ \dual{(i\lambda I-\mathbb{B})U}{U}  \}|\\
&\leq & |\dual{F}{U}|\leq \|F\|_\mathbb{H}\|\|U\|_\mathbb{H}.
\end{eqnarray*}
Therefore
\begin{equation}\label{dis-10}
a^2(\beta-\alpha)\|A^\frac{1}{2} v\|^2+\|A^\frac{1}{2}\theta \|^2 \leq  \|F\|_\mathbb{H}\|\|U\|_\mathbb{H}.
\end{equation}

Next, we show some lemmas that will lead us to the proof of the main theorem of this section.
\begin{lemma}\label{Lemma3}
Let $\delta>0$.  There exists $C_\delta>0$ such that the solutions of the system \eqref{Eq1.1}-\eqref{Eq1.3}  for $|\lambda| > \delta$,  satisfy
\begin{eqnarray}\label{Item01Lemma3}
(i)\quad|\lambda|\|A^\frac{1}{2}u\|^2 & \leq & C_\delta\|F\|_\mathbb{H}\|U\|_\mathbb{H}\qquad{\rm for}\qquad  0 \leq\phi \leq 1,\\
\label{Item02Lemma3}
(ii)\quad |\lambda|^2\|A^\frac{1}{2}u\|^2 & \leq & C_\delta\{ \|F\|_\mathbb{H}\|U\|_\mathbb{H}+\|F\|^2_\mathbb{H}\} \qquad{\rm for}\qquad  0 \leq\phi \leq 1,\\
\label{Item03Lemma3}
(iii)\quad |\dual{w}{Au}| & \leq & C_\delta\{ \|F\|_\mathbb{H}\|U\|_\mathbb{H}+\|F\|^2_\mathbb{H}\} \qquad{\rm for}\qquad  0 \leq\phi \leq 1.
\end{eqnarray}
\end{lemma}
\begin{proof}$\!\!(i)$: Taking the duality product between equation\eqref{Pesp-10} and $Au$,  taking advantage of the self-adjointness of the powers of the operator $A$,  we obtain
\begin{equation}\label{Eq01ALemma3}
i\lambda\|A^\frac{1}{2}u\|^2=\dual{A^\frac{1}{2} v}{A^\frac{1}{2}u}+\dual{A^\frac{1}{2}f^1}{A^\frac{1}{2}u}.
\end{equation}
Applying Cauchy-Schwarz and Young inequalities and norms $\|F\|_\mathbb{H}$ and $\|U\|_\mathbb{H}$,  for $\varepsilon>0$, exists $C_\varepsilon>0$ such that
\begin{equation}\label{Eq02Lemma3}
|\lambda|\|A^\frac{1}{2}u\|^2\leq C_\varepsilon \|A^\frac{1}{2}v\|^2
+\varepsilon \|A^\frac{1}{2}u\|^2+C_\delta \|F\|_\mathbb{H} \|U\|_\mathbb{H}.
\end{equation}
Therefore for $|\lambda|>1$, we have $\varepsilon\|A^\frac{1}{2}u\|^2\leq \varepsilon|\lambda|\|A^\frac{1}{2}u\|^2$ and using \eqref{dis-10}  finish proof of item $(i)$  this lemma.\\
{\bf Proof:$(ii)$} On the other hand, multiplying by $\lambda$ the equation \eqref{Eq01ALemma3} and applying Cauchy-Schwarz and Young inequalities,  for $\varepsilon>0$, exists $C_\varepsilon>0$ such that
\begin{equation}
\label{Eq02ALemma3}
|\lambda|^2\|A^\frac{1}{2}u\|^2  \leq  C_\varepsilon \|A^\frac{1}{2}v\|^2
+\varepsilon|\lambda|^2 \|A^\frac{1}{2}u\|^2+C_\varepsilon \|F\|^2_\mathbb{H},
\end{equation}
from \eqref{dis-10}  finish proof of item $(ii)$  this lemma.\\
{\bf Proof:$(iii)$} Using  identity $w=-\lambda^2u-i\lambda f^1-f^2$, we have
\begin{multline*}
\dual{w}{Au}=\dual{-\lambda^2 u-i\lambda f^1-f^2}{Au}=-\lambda^2\|A^\frac{1}{2}u\|^2-i\dual{A^\frac{1}{2}f^1}{\lambda A^\frac{1}{2}u}-\dual{A^\frac{1}{2}f^2}{A^\frac{1}{2}u}.
\end{multline*}
Applying Cauchy-Schwarz  and item $(ii)$ this lema, finish proof this item.
\end{proof}
\begin{theorem}\label{TDExponential}
The semigroup $S(t) = e^{t\mathbb{B}}$   is exponentially stable when the parameter $\phi$ assumes values in the interval $[0,1]$.
\end{theorem}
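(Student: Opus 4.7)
The plan is to invoke Gearhart's characterization (Theorem \ref{LiuZExponential}) and verify its two hypotheses \eqref{EImaginario} and \eqref{Exponential}. For the resolvent bound \eqref{Exponential}, by Remark \ref{EqvExponential} it suffices to establish $\|U\|_\mathbb{H}^2\leq C_\delta(\|F\|_\mathbb{H}\|U\|_\mathbb{H}+\|F\|_\mathbb{H}^2)$ for $|\lambda|>\delta$; a standard Young-type absorption then yields $\|U\|_\mathbb{H}\leq C\|F\|_\mathbb{H}$. Looking at the four summands in $\|U\|_\mathbb{H}^2$ from \eqref{NORM}, I already have the dissipation estimate \eqref{dis-10} controlling $\|A^{1/2}v\|^2$ and $\|A^{1/2}\theta\|^2$ (hence $\|\theta\|^2$, since $A$ is strictly positive), and Lemma \ref{Lemma3} gives $\|A^{1/2}u\|^2\leq C_\delta\|F\|\|U\|/|\lambda|$, which together with \eqref{dis-10} controls $\|A^{1/2}u+\alpha A^{1/2}v\|^2$ by the triangle inequality. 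So three out of four terms are already in hand.

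The only missing piece is $\|v+\alpha w\|^2$, and since $\|v\|^2\leq c\|A^{1/2}v\|^2$ is already controlled, the real task is to bound $\|w\|^2$. For this I would take the duality product of the third resolvent equation \eqref{Pesp-30} with $\bar v$. Using \eqref{Pesp-20} to write $\langle w,v\rangle=i\lambda\|v\|^2-\langle f^2,v\rangle$ and taking real parts produces the identity
\begin{equation*}
\alpha\lambda^2\|v\|^2=a^2\beta\|A^{1/2}v\|^2+a^2\operatorname{Re}\langle A^{1/2}u,A^{1/2}v\rangle-\eta\operatorname{Re}\langle A^{\phi/2}\theta,A^{\phi/2}v\rangle+\alpha\lambda\operatorname{Im}\langle f^2,v\rangle+R,
\end{equation*}
where $R$ collects forcing terms of type $\langle f^i,v\rangle$. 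Each geometric term on the right is $\leq C\|F\|\|U\|$ using \eqref{dis-10}, Lemma \ref{Lemma3}, and the continuous embedding $\mathfrak{D}(A^{1/2})\hookrightarrow\mathfrak{D}(A^{\phi/2})$ valid for every $\phi\in[0,1]$. The apparently dangerous term $\alpha\lambda\operatorname{Im}\langle f^2,v\rangle$ is the pivotal point: rewriting $|\lambda|\|v\|=\|i\lambda v\|=\|w+f^2\|\leq\|w\|+\|f^2\|\leq C(\|U\|_\mathbb{H}+\|F\|_\mathbb{H})$ turns it into $C(\|F\|\|U\|+\|F\|^2)$. Hence $\lambda^2\|v\|^2\leq C(\|F\|\|U\|+\|F\|^2)$, and then \eqref{Pesp-20} gives $\|w\|^2\leq 2\lambda^2\|v\|^2+2\|f^2\|^2\leq C(\|F\|\|U\|+\|F\|^2)$, from which $\|v+\alpha w\|^2$ is controlled by the same quantity.

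Summing the four pieces gives $\|U\|_\mathbb{H}^2\leq C_\delta(\|F\|_\mathbb{H}\|U\|_\mathbb{H}+\|F\|_\mathbb{H}^2)$, and Young's inequality closes \eqref{Exponential}. For condition \eqref{EImaginario}, I would argue in two steps: $\mathbb{B}$ has no purely imaginary eigenvalues, because $\mathbb{B}U=i\lambda U$ combined with \eqref{Eqdissipative} forces $A^{1/2}v=0$ and $A^{1/2}\theta=0$, hence $v=\theta=0$, and then \eqref{Pesp-10}--\eqref{Pesp-40} with $F=0$ yield $u=w=0$; combined with $0\in\rho(\mathbb{B})$ (proved in Section 2), the compactness of the resolvent plus the just-established resolvent boundedness at $\infty$ and a standard continuity/connectedness argument extend $\rho(\mathbb{B})$ to the whole imaginary axis.

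The principal obstacle is exactly the bound on $\|w\|^2$, since $w=u_{tt}$ sits at the top of the MGT chain and is not controlled by dissipation. The trick that makes the proof work for the entire range $\phi\in[0,1]$ is that the coupling $A^\phi\theta$ appears only through $\langle A^{\phi/2}\theta,A^{\phi/2}v\rangle$, which for $\phi\leq 1$ is bounded by $\|A^{1/2}\theta\|\|A^{1/2}v\|\leq\|F\|\|U\|$, so no threshold on $\phi$ is needed. This is why exponential stability persists uniformly for all $\phi\in[0,1]$, in contrast to the analyticity and Gevrey results stated in the introduction.
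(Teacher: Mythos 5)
Your verification of the resolvent bound \eqref{Exponential} is correct and is in substance the same multiplier argument the paper uses: the paper pairs \eqref{Pesp-20} with $w$ and substitutes \eqref{Pesp-30} (Lemma \ref{Lemma6A}), obtaining $\|w\|^2$ on the left and absorbing a term $\frac{1}{\alpha}\dual{v}{w}$, whereas you pair \eqref{Pesp-30} with $v$ and substitute \eqref{Pesp-20}, obtaining $\alpha\lambda^2\|v\|^2$ on the left and converting $|\lambda|\|v\|=\|w+f^2\|$ back into $\|w\|$. These are conjugate forms of one identity, resting on the same three inputs (the dissipation estimate \eqref{dis-10}, Lemma \ref{Lemma3}, and the embedding $\mathfrak{D}(A^{1/2})\hookrightarrow\mathfrak{D}(A^{\phi/2})$ that handles the coupling term uniformly for $\phi\in[0,1]$), so the quantitative part of your proof matches the paper's, up to the harmless extra $\|F\|_\mathbb{H}^2$ that you remove by Young's inequality.

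The one genuine flaw is the appeal to compactness of the resolvent of $\mathbb{B}$ in the imaginary-axis step. That resolvent is \emph{not} compact: solving $\mathbb{B}U=F$ forces $v=f^1$ and $w=f^2$, so the second component of $\mathbb{B}^{-1}F$ is $f^1$ itself, carried in the same $\mathfrak{D}(A^{1/2})$ topology in which it enters $\|F\|_\mathbb{H}$; the bounded sequence $F_n=(\sigma_n^{-1/2}e_n,0,0,0)$ then has images with no convergent subsequence. Consequently you may not conclude that $\sigma(\mathbb{B})$ is pure point spectrum, and ``no purely imaginary eigenvalues'' alone does not yield $i\R\subset\rho(\mathbb{B})$. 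The other half of your sentence does save the step: the uniform bound $\|(i\lambda I-\mathbb{B})^{-1}\|_{\mathcal{L}(\mathbb{H})}\leq C_\delta$ for $|\lambda|>\delta$, valid on $i\R\cap\rho(\mathbb{B})$, combined with $0\in\rho(\mathbb{B})$ and the standard lower bound $\|(i\lambda I-\mathbb{B})^{-1}\|\geq 1/\mathrm{dist}(i\lambda,\sigma(\mathbb{B}))$, shows by connectedness that no point of $i\R$ can be a boundary point of $\rho(\mathbb{B})$ --- which is precisely the contradiction argument of the paper's Lemma \ref{EImaginary}. Drop the compactness claim and rely on that argument alone.
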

\begin{proof} 
Let's first check the condition \eqref{EqvExponencial},  we will start by proving in the following lemma the estimate of the term $a^2\|A^\frac{1}{2}u+\alpha A^\frac{1}{2} v\|^2 $  and $\|v+\alpha w\|^2$  of $\|U\|_\mathbb{H}^2$:
\begin{lemma}\label{Lemma6}
Let $\delta>0$.  There exists $C_\delta>0$ such that the solutions of the system \eqref{Eq1.1}-\eqref{Eq1.3}  for $|\lambda| > \delta$,  satisfy
\begin{eqnarray}\label{Eq01Lemma6}
 \|A^\frac{1}{2}u+\alpha A^\frac{1}{2} v\|^2 & \leq &  C_\delta\|F\|_\mathbb{H}\|U\|_\mathbb{H}\qquad{\rm for}\qquad  0 \leq\phi \leq 1.
\end{eqnarray}
\end{lemma}
\begin{proof}
As $\|A^\frac{1}{2}u+\alpha A^\frac{1}{2} v\|^2\leq C [\|A^\frac{1}{2}u\|^2+\|A^\frac{1}{2} v\|^2]$ and for  $|\lambda|>1$ from item $(i)$ of Lemma \ref{Lemma3}    and  estimate  \eqref{dis-10},  we finish the proof of  this lemma.
\end{proof}
\begin{lemma}\label{Lemma6A}
Let $\delta>0$.  There exists $C_\delta>0$ such that the solutions of the system \eqref{Eq1.1}-\eqref{Eq1.3}  for $|\lambda| > \delta$,  satisfy
\begin{equation}\label{Eq01Lemma6A}
\| w \|^2 \leq C_\delta\|F\|_\mathbb{H}\|U\|_\mathbb{H}\qquad{\rm for}\qquad 0 \leq \phi\leq 1.
\end{equation}
\end{lemma}
\begin{proof}
Taking the duality product between equation \eqref{Pesp-20} and $w$ and using \eqref{Pesp-30},  taking advantage of the self-adjointness of the powers of the operator $A$,  we obtain
\begin{multline}\label{Eq02Lemma6A}
 \|w\|^2=-\frac{1}{\alpha}\dual{v}{i\alpha\lambda w}-\dual{f^2}{w}
 = \dfrac{a^2}{\alpha}\dual{A^\frac{1}{2}v}{A^\frac{1}{2}u}\\+\dfrac{a^2\beta}{\alpha}\|A^\frac{1}{2}v\|^2-\dfrac{\eta}{\alpha}\dual{A^\frac{1}{2}v}{A^{\phi-\frac{1}{2}}\theta}+\dfrac{1}{\alpha}\dual{v}{w} -\dual{v}{f^3}-\dfrac{1}{\alpha}\dual{f^2}{w}.
\end{multline}
For $\varepsilon>0$, exists $C_\varepsilon>0$ such that  $\frac{1}{\alpha}|\dual{v}{w}|\leq C_\varepsilon\|v\|+\varepsilon\|w\|^2$. Applying estimates \eqref{dis-10} and  \eqref{Item01Lemma4A} of Lemma \ref{Lemma4A} for $|\lambda|>1$, we finish to proof this is lemma.
\end{proof}
Finally the following lemma estimates the term $\|v+\alpha w\|^2 $ of $\|U\|_\mathbb{H}^2$:
\begin{lemma}\label{Lemma7}
Let $\delta>0$.  There exists $C_\delta>0$ such that the solutions of the system \eqref{Eq1.1}-\eqref{Eq1.3}  for $|\lambda| > \delta$,  satisfy
\begin{equation}\label{Eq01Lemma7}
\|v+\alpha w\|^2 \leq C_\delta\|F\|_\mathbb{H}\|U\|_\mathbb{H}\qquad{\rm for}\qquad  0 \leq\phi \leq 1.
\end{equation}
\end{lemma}
\begin{proof}
As $\|v+\alpha w\|^2\leq C (\|v\|^2+\alpha^2\|w\|^2)^2$ and as $0\leq\frac{1}{2}$ applying continuous embedding,  estimative \eqref{dis-10} and Lemma \ref{Lemma6A},  we finish to proof this is lemma.
\end{proof}
  Finally,  using  the Lemmas \ref{Lemma6}--\ref{Lemma7} and estimates \eqref{dis-10},  we get
\begin{equation}\label{Eq012Exponential}
\|U\|_\mathbb{H}^2\leq C_\delta \|F\|_\mathbb{H}\|U\|_\mathbb{H}\quad{\rm for}\quad 0\leq\phi\leq 1.
\end{equation}
Therefore the condition \eqref{Exponential} for $\phi\in [0,1]$ of Theorem \ref{LiuZExponential} is verified.  Next, we show the condition \eqref{EImaginario} of Theorem \ref{LiuZExponential}.

\begin{lemma}\label{EImaginary}
\label{iR}
Let $\varrho(\mathbb{B})$ be the resolvent set of operator
$\mathbb{B}$. Then
\begin{equation}
i\hspace{0.5pt}\mathbb{R}\subset\varrho(\mathbb{B}).
\end{equation}
\end{lemma}
\begin{proof}
Since $\mathbb{B}$ is a closed operator and $\mathfrak{D}(\mathbb{B})$ has compact embedding into the energy space $\mathbb{H}$, the spectrum $\sigma(\mathbb{B})$ contains only eigenvalues.
 Let us prove that $i\R\subset\rho(\mathbb{B})$ by using an argument by contradiction, so we suppose that $i\R\not\subset \rho(\mathbb{B})$. 
 As $0\in\rho(\mathbb{B})$ and $\rho(\mathbb{B})$ is open, we consider the highest positive number $\lambda_0$ such that the $]-i\lambda_0,i\lambda_0[\subset\rho(\mathbb{B})$ then $i\lambda_0$ or $-i\lambda_0$ is an element of the spectrum $\sigma(\mathbb{B})$.   
 We Suppose $i\lambda_0\in \sigma(\mathbb{B})$ (if $-i\lambda_0\in \sigma(\mathbb{B})$ the proceeding is similar). Then, for $0<\delta<\lambda_0$ there exist a sequence of real numbers $(\lambda_n)$, with $\delta\leq\lambda_n<\lambda_0$, $\lambda_n\con \lambda_0$, and a vector sequence  $U_n=(u_n,v_n,w_n, \theta_n)\in D(\mathbb{B})$ with  unitary norms, such that
\beqae
\|(i\lambda_nI-\mathbb{B}) U_n\|_\mathbb{H}=\|F_n\|_\mathbb{H}\con 0,
\eeqae
as $n\con \infty$.   From  estimative \eqref{Eq012Exponential},   we have 
\begin{multline}
\|U_n\|^2_\mathbb{H}=a^2\alpha(\beta-\alpha)\|A^\frac{1}{2}v_n\|^2 +a^2\|A^\frac{1}{2}u_n+\alpha A^\frac{1}{2}v_n\|^2\\+\|v_n+\alpha w_n\|^2+\|\theta_n\|^2
\leq C_\delta\|F_n\|_\mathbb{H}\|U_n\|_\mathbb{H}=C_\delta\|F_n\|_\mathbb{H}\con 0.
\end{multline}
Therefore, we have  $\|U_n\|_\mathbb{H}\con 0$ but this is absurd, since $\|U_n\|_\mathbb{H}=1$ for all $n\in\N$. Thus, $i\R\subset \rho(\mathbb{B})$. This completes the proof of this lemma. 
\end{proof}
Therefore the semigroup $S(t)=e^{t\mathbb{B}}$ is exponentially stable  for $\phi\in [0,1]$,  thus we finish the proof of this Theorem \ref{TDExponential}. 
\end{proof}
\section{Regularity}
\label{4.0}
\subsection{Study of Analyticity  of the semigroup $S(t)=e^{t\mathbb{B}}$}
In this subsection, we will show that the semigroup $S(t)$, is analytic for the parameter $\phi=1$. The following theorem characterizes the analyticity of the semigroups  $S(t)$.

\begin{theorem}[see \cite{LiuZ}]\label{LiuZAnalyticity}
    Let $S(t)=e^{t\mathbb{B}}$ be $C_0$-semigroup of contractions  on a Hilbert space $\mathbb{H}$.   Suppose that
    \begin{equation}\label{RBEixoIm}
    \rho(\mathbb{B})\supseteq\{ i\lambda/ \lambda\in \R \}  \equiv i\R.
    \end{equation}
     Then $S(t)$ is analytic if and only if
    \begin{equation}\label{Analyticity}
     \limsup\limits_{|\lambda|\to
        \infty}
    \|\lambda(i\lambda I-\mathbb{B})^{-1}\|_{\mathcal{L}(\mathbb{H})}<\infty, 
    \end{equation}
    holds.
\end{theorem}
Before proving the main theorem of this subsection, we will prove some lemmas that will be used for analyticity for $\phi=1$ and the determination of Gevrey classes para $\phi\in(\frac{1}{2},1)$.
\begin{lemma}\label{Lemma4A}
Let $\delta>0$.  There exists $C_\delta>0$ such that the solutions of the system \eqref{Eq1.1}-\eqref{Eq1.3}  for $|\lambda| > \delta$,  satisfy
\begin{eqnarray}\label{Item01Lemma4A}
(i)\quad |\lambda|\|A^\frac{\phi}{2}v\|^2 &\leq& C_\delta\|F\|_\mathbb{H}\|U\|_\mathbb{H}\qquad{\rm for}\qquad  0\leq \phi\leq 1,
\\
\label{Item02Lemma4A}
(ii)\quad |\lambda|^2\|A^\frac{2\phi-1}{2}v\|^2 &\leq& C_\delta\{ \|F\|_\mathbb{H}\|U\|_\mathbb{H}+\|F\|^2_\mathbb{H}\}\quad{\rm for}\quad  0\leq \phi\leq 1.
\end{eqnarray}
\end{lemma}
\begin{proof}\!\!
$(i)$  Taking the duality product between equation \eqref{Pesp-20} and $A^\phi v$,  we obtain
\begin{equation*}
i\lambda\|A^\frac{\phi}{2}v\|^2=\dual{A^\frac{2\phi-1}{2}w}{A^\frac{1}{2}v}+\dual{A^\frac{1}{2}f^2}{A^\frac{2\phi-1}{2}v}.
\end{equation*}
Applying Cauchy-Schwarz and Young  inequalities and estimative \eqref{Item01Lemma4} of Lemma \ref{Lemma4},  we finish the proof of item $(i)$ of this lemma.
\\
{\bf Proof: $(ii)$} Applying the operator $A^\frac{2\phi-1}{2}$ in the equation \eqref{Pesp-20} and then using Young inequality, we will have
\begin{equation*}
|\lambda|^2\| A^\frac{2\phi-1}{2}v\|^2=C_\delta\{ \|A^\frac{2\phi-1}{2}w\|^2+\|A^\frac{2\phi-1}{2}f^2\|^2\}.
\end{equation*}
Using item $(i)$ of the Lemma \ref{Lemma4}, 
we finish the proof   this item.
\end{proof}
\begin{lemma}\label{Lemma4}
Let $\delta>0$.  There exists $C_\delta>0$ such that the solutions of the system \eqref{Eq1.1}-\eqref{Eq1.3}  for $|\lambda| > \delta$,  satisfy
\begin{eqnarray}
\label{Item01Lemma4}
(i)\; \|A^\frac{2\phi-1}{2}w\|^2 &\leq &  C_\delta\|F\|_\mathbb{H}\|U\|_\mathbb{H}\qquad{\rm for}\qquad  0\leq \phi\leq 1,\\
\label{Item02Lemma4}
(ii)\;  |\lambda|^2\|A^{\phi-1}w\|^2 &\leq & C_\delta\{\|F\|_\mathbb{H}\|U\|_\mathbb{H}+\|F\|^2_\mathbb{H}\}\quad{\rm for}\quad  0\leq \phi\leq \dfrac{3}{4},
\\
\label{Item03Lemma4}
(iii)\;  |\lambda|\|A^\frac{\phi-1}{2}w\|^2 &\leq & C_\delta\|F\|_\mathbb{H}\|U\|_\mathbb{H}\quad {\rm for}\quad 0\leq \phi\leq 1,
\\
\label{Item04Lemma4}
(iv)\; |\dual{\lambda \theta}{A^{1-\phi}v}| &\leq  & C_\delta\|F\|_\mathbb{H}\|U\|_\mathbb{H}\quad {\rm for}\quad \dfrac{1}{2}\leq \phi\leq 1.
\end{eqnarray}
\end{lemma}
\begin{proof}
 \!\!$(i)$:
Performing the duality product between equation \eqref{Pesp-40} and $A^{\phi-1}w$,  taking advantage of the self-adjointness of the powers of the operator $A$,  we obtain
\begin{eqnarray}
\nonumber
\eta\alpha\|A^\frac{2\phi-1}{2}w\|^2\hspace*{-0.3cm}&=&\hspace*{-0.3cm}\frac{1}{\alpha}\dual{A^{\phi-1}\theta}{i\alpha\lambda w}-\eta\dual{A^\frac{2\phi-1}{2}v}{A^\frac{2\phi-1}{2}w}-\dual{A^\frac{1}{2}\theta}{A^\frac{2\phi-1}{2}w}+\dual{f^4}{A^{\phi-1}w}\\
\label{Eq02Lemma4}
&= &\hspace*{-0.3cm}-\dfrac{a^2}{\alpha}\dual{A^\frac{2\phi-1}{2}\theta}{A^\frac{1}{2}u}-\dfrac{a^2\beta}{\alpha}\dual{A^\frac{2\phi-1}{2}\theta}{A^\frac{1}{2}v}+\dfrac{\eta}{\alpha}\|A^\frac{2\phi-1}{2}\theta\|^2\\
\nonumber
& &\hspace*{-0.3cm} -\dfrac{1}{\alpha}\dual{A^{-\frac{1}{2}}\theta}{A^\frac{2\phi-1}{2}w}+\dual{A^{\phi-1}\theta}{f^3}-\eta\dual{A^\frac{2\phi-1}{2}v}{A^\frac{2\phi-1}{2}w}\\
\nonumber
& & -\dual{A^\frac{1}{2}\theta}{A^\frac{2\phi-1}{2}w}+\dual{f^4}{A^{\phi-1}w}.
\end{eqnarray}
Applying  Young inequalities and from $-\frac{1}{2}\leq\frac{2\phi-1}{2}\leq\frac{1}{2}$ and $\phi-1\leq 0$,   using continuous embedding,  for $\varepsilon>0$,  exists $C_\varepsilon>0$ such that
\begin{multline}\label{Eq03Lemma4}
\|A^\frac{2\phi-1}{2}w\|^2\leq C\{ \|A^\frac{1}{2}v\|^2+\|A^\frac{1}{2}u\|^2+\|A^\frac{1}{2}\theta\|^2\}+C_\varepsilon\{ \|A^\frac{1}{2}v\|^2+\|A^\frac{1}{2}\theta\|^2\}\\
+\varepsilon\|A^\frac{2\phi-1}{2}w\|^2+C\|\theta\|\|f^3\|+C\|f^4\|\|w\|.
\end{multline}
For $|\lambda|>1$, we have $\|A^\frac{1}{2}u\|^2\leq |\lambda|\|A^\frac{1}{2}u\|^2$, from Lemma \ref{Lemma3},  estimative \eqref{dis-10},   using norms $\|F\|_\mathbb{H}$ and $\|U\|_\mathbb{H}$  finish proof of item $(i)$ this lemma.
\\
{\bf Proof $(ii)$:}  On the other hand, performing the duality product between equation \eqref{Pesp-30} and $\lambda A^{2(\phi-1)}w$,  taking advantage of the self-adjointness of the powers of the operator $A$,  we obtain
\begin{eqnarray}
\nonumber
i\alpha|\lambda|^2\|A^{\phi-1}w\|^2 & = & -a^2\dual{\lambda A^\frac{2\phi-1}{2}u}{A^\frac{2\phi-1}{2}w}-a^2\beta\dual{\lambda A^\frac{2\phi-1}{2}v}{A^\frac{2\phi-1}{2}w} -\lambda\|A^{\phi-1}w\|^2\\
\nonumber
& &+\eta\dual{\lambda\theta}{A^{3\phi-2}w}+\alpha\dual{f^3}{\lambda A^{2(\phi-1)}w}\\
\nonumber
&=& -a^2\dual{\lambda A^\frac{2\phi-1}{2}u}{A^\frac{2\phi-1}{2}w}+ia^2\beta  \|A^\frac{2\phi-1}{2}w\|^2+\alpha\dual{f^3}{\lambda A^{2(\phi-1)}w}\\
\nonumber
& & +ia^2\beta\dual{A^\frac{1}{2}f^2}{A^{2\phi-\frac{3}{2}}w}+\eta\dual{\lambda\theta}{A^{3\phi-2}w}-\lambda\|A^{\phi-1}w\|^2,
\end{eqnarray}
then, taking imaginary part, for $\varepsilon>0$ exists $C_\varepsilon>0$, such that 
\begin{eqnarray}
\label{Eq01ALemma4}
|\lambda|^2 \|A^{\phi-1}w\|^2 & \leq &  C_\delta\{ |\lambda |^2 \|A^\frac{2\phi-1}{2}u\|^2+\|A^\frac{2\phi-1}{2}w\|^2+\|A^\frac{1}{2}f^2\| \|A^{2\phi-\frac{3}{2}}w\|\\
\nonumber
& & +C_\varepsilon|\|A^\frac{1}{2}\theta\|^2+\varepsilon|\lambda|^2\|A^{3\phi-\frac{5}{2}}w\|^2+C_\varepsilon\|f^3\|^2+
\varepsilon|\lambda|^2\|A^{2(\phi-1)}w\|^2\}.
\end{eqnarray}
Therefore,  as  $0\leq\phi\leq\frac{3}{4}\quad\Rightarrow \quad 0\leq\frac{2\phi-1}{2}\leq\frac{1}{4}<\frac{1}{2}$,  of item $(ii)$ Lemma \ref{Lemma3}, item $(i)$ this lemma and estimative \eqref{dis-10}, finish proof this item.\\
{\bf Proof $(iii)$:}  Multiplying by $A^{\phi-1}w$ the equation \eqref{Pesp-30} and by $\eta A^{2\phi-2}w$ the equation \eqref{Pesp-40} and adding the results, we have
\begin{multline*}
i\alpha\lambda\|A^\frac{\phi-1}{2}w\|^2+a^2\dual{A^\phi u}{i\lambda v-f^2}+a^2\beta\dual{A^\phi v}{i\lambda v-f^2}-\eta\dual{A^\phi\theta}{A^{\phi-1}w}\\
+\|A^\frac{\phi-1}{2}w\|^2+i\eta\dual{\lambda\theta}{A^{2\phi-2}w}+\eta^2\dual{A^\phi v}{i\lambda A^{2\phi-2}v-A^{2\phi-2}f^2}\\
+\alpha\eta^2\|A^\frac{3\phi-2}{2}w\|^2+\eta\dual{A^\phi\theta}{A^{\phi-1}w}=\alpha\dual{f^3}{A^{\phi-1}w}+\eta\dual{f^4}{A^{2\phi-2}w}.
\end{multline*}
Taking imaginary part, we have
\begin{multline}
\label{eq002v}
\alpha\lambda\|A^\frac{\phi-1}{2}w\|^2=a^2\beta\lambda\|A^\frac{\phi}{2}v\|^2+\eta^2\lambda\|A^\frac{3\phi-2}{2}v\|^2 +{\rm Im}\{a^2\beta\dual{A^\frac{2\phi-1}{2}v}{A^\frac{1}{2}f^2}\\
+ia^2\dual{\sqrt{|\lambda|}A^\frac{\phi}{2}u}{\dfrac{\lambda}{\sqrt{|\lambda|}} A^\frac{\phi}{2}v}
+a^2\dual{A^\frac{2\phi-1}{2}u}{A^\frac{1}{2} f^2}+ \eta^2\dual{A^{3\phi-\frac{5}{2}}v}{A^\frac{1}{2}f^2} \\
+i\eta\dual{\dfrac{\lambda}{\sqrt{|\lambda|}}\theta}{\sqrt{|\lambda|}A^{2\phi-2}w} +\alpha\dual{f^3}{A^{\phi-1}w}+\eta\dual{f^4}{A^{2\phi-2}w}\}.
\end{multline}
Of \eqref{eq002v}, as for $0\leq\phi\leq 1$ we have $\frac{3\phi-2}{2}\leq \frac{2\phi-1}{2}\leq \frac{\phi}{2}\leq\frac{1}{2}$  and $3\phi-\frac{5}{2}\leq \frac{1}{2}$,  using continuous embedding and  aplying Cauchy-Schwarz and Young inequalities  for $\varepsilon>0$ exists $C_\varepsilon>0$,  such that
\begin{multline*}
|\lambda|\|A^\frac{\phi-1}{2}w\|^2\leq C_\delta\{ \|F\|_\mathbb{H}\|U\|_\mathbb{H}+|\lambda|\|A^\frac{\phi}{2}v\|^2+|\lambda|\|A^\frac{1}{2}u\|^2\}\\
+C_\varepsilon|\lambda| \|\theta\|^2+\varepsilon|\lambda|\|A^{2\phi-2}w\|^2\quad{\rm for}\quad 0\leq\phi\leq 1.
\end{multline*}
Finally, as  $2\phi-2\leq \frac{\phi-1}{2}$ using continuous embedding and    item $(i)$ Lemma \ref{Lemma3} and item $(i)$ of  Lemma \ref{Lemma4A}, finish proof this item.
\\
{\bf Proof  $(iv)$:} Just observe that for $\frac{1}{2}\leq\phi$, we have $\frac{1}{2}-\phi\leq \frac{2\phi-1}{2}\leq\frac{1}{2}$. Then
$$|\dual{\lambda \theta}{A^{1-\phi}v}|=|\dual{A^\frac{1}{2}\theta}{A^{\frac{1}{2}-\phi}(-iw-if^2)}|\leq |\dual{A^\frac{1}{2}\theta}{A^{\frac{1}{2}-\phi}w}|+|\dual{A^\frac{1}{2}\theta}{A^{\frac{1}{2}-\phi}f^2}|.$$
\end{proof}
\begin{lemma}\label{Lemma5A}
Let $\delta>0$.  There exists $C_\delta>0$ such that the solutions of the system \eqref{Eq1.1}-\eqref{Eq1.3}  for $|\lambda| > \delta$,  satisfy
\begin{eqnarray}
\label{Item01Lemma5A}
  \hspace*{-0.25cm}(i)\;|\lambda|\|\theta\|^2\hspace*{-0.25cm} &\leq & \hspace*{-0.25cm}C_\delta\|F\|_\mathbb{H}\|U\|_\mathbb{H}\qquad{\rm for}\quad 0\leq \phi\leq 1,
 \\
 \label{Item03Lemma5A}
 \hspace*{-0.25cm}(ii)\; |\lambda|^2\|\theta\|^2 \hspace*{-0.25cm}&\leq &\hspace*{-0.25cm} C_\delta\{\|F\|_\mathbb{H}\|U\|_\mathbb{H}+\|F\|^2_\mathbb{H}\}\quad{\rm for}\quad 0\leq \phi\leq \dfrac{3}{4},\\
 \label{Item04Lemma54}
 \hspace*{-0.25cm}(iii)\; |\lambda|^2\|A^\frac{2\phi-3}{2}\theta\|^2 & \leq & C_\delta\{\|F\|_\mathbb{H}\|U\|_\mathbb{H}+\|F\|^2_\mathbb{H}\}\quad{\rm for}\quad 0\leq \phi\leq 1.
\end{eqnarray}
\end{lemma}
\begin{proof}
$(i)$:  Performing the duality product between equation \eqref{Pesp-40} and $\lambda A^{-1}\theta$,  taking advantage of the self-adjointness of the powers of the operator $A$,  we obtain
\begin{eqnarray}
\nonumber
\lambda\|\theta\|^2 &=& -i\lambda^2\|A^{-\frac{1}{2}}\theta\|^2-\eta\dual{-iw-if^2}{A^{\phi-1}\theta}-\eta\dual{\alpha\lambda w}{A^{\phi-1}\theta}+\dual{f^4}{\lambda A^{-1}\theta}\\
\nonumber
&=&-i\lambda^2\|A^{-\frac{1}{2}}\theta\|^2+i\eta\dual{w}{A^{\phi-1}\theta}+i\eta\dual{f^2}{A^{\phi-1}\theta}-i\eta a^2\dual{A^\frac{1}{2}u}{A^{\phi-\frac{1}{2}}\theta}\\
\nonumber
& & -i\eta a^2\beta\dual{A^\frac{1}{2}v}{A^{\phi-\frac{1}{2}} \theta}+i\eta^2\|A^\frac{2\phi-1}{2}\theta\|^2-i\eta\dual{w}{A^{\phi-1}\theta}+i\eta\alpha\dual{f^3}{A^{\phi-1}\theta}\\
\label{Eq02Lemma5A}
& & +i\eta\dual{f^4}{A^{\phi-1}v}+i\alpha\eta\dual{f^4}{A^{\phi-1}w}+i\dual{f^4}{\theta}-i\|A^{-\frac{1}{2}}f^4\|^2.
\end{eqnarray}
Taking the real part of \eqref{Eq02Lemma5A} and simplifying, we get
\begin{eqnarray}
\nonumber
\lambda\|\theta\|^2 &=&{\rm Re}\{ i\eta\dual{f^2}{A^{\phi-1}\theta}-i\eta a^2\dual{A^\frac{1}{2}u}{A^{\phi-\frac{1}{2}}\theta} -i\eta a^2\beta\dual{A^\frac{1}{2}v}{A^{\phi-\frac{1}{2}} \theta}\\
\label{Eq03Lemma5A}
& &+i\eta\alpha\dual{f^3}{A^{\phi-1}\theta} +i\eta\dual{f^4}{A^{\phi-1}v}+i\alpha\eta\dual{f^4}{A^{\phi-1}w}+i\dual{f^4}{\theta}\}.
\end{eqnarray}
As $|{\rm Re}\{-i\eta a^2\dual{A^\frac{1}{2}u}{A^{\phi-\frac{1}{2}}\theta} \}|\leq C\{ \|A^\frac{1}{2}u\|^2+\|A^{\phi-\frac{1}{2}}\theta\|^2\}$.    And for  $|\lambda|>1$ and  $0\leq\phi\leq 1$ we have,   $\phi-1\leq 0$  and  $\phi-\frac{1}{2}\leq \frac{1}{2}$,  applying continuous embedding $\mathfrak{D}(A^{\tau_2})\hookrightarrow  \mathfrak{D}(A^{\tau_1}), \;\tau_2 >\tau_1$,     estimative  \eqref{dis-10} and applying Cauchy-Schwarz,  we finish the proof of  item $(i)$ of this lemma.
\\
 {\bf Proof of $(ii)$:}   Multiplying by $\lambda$ the equation  \eqref{Eq03Lemma5A}, we have
 \begin{eqnarray}
\nonumber
|\lambda|^2\|\theta\|^2\hspace*{-0.25cm} &= &\hspace*{-0.25cm}{\rm Re}\{ i\eta\dual{A^{\phi-1}f^2}{\lambda\theta}-i\eta a^2\dual{\lambda A^\frac{1}{2}u}{A^{\phi-\frac{1}{2}}\theta} -\eta a^2\beta\dual{i\lambda v}{A^\phi\theta}+i\dual{f^4}{\lambda \theta}\\
\label{Eq04Lemma5A}
& &+i\eta\alpha\dual{f^3}{\lambda A^{\phi-1}\theta} +i\eta\dual{f^4}{\lambda A^{\phi-1}v}+i\alpha\eta\dual{f^4}{\lambda A^{\phi-1}w}.
\end{eqnarray}
For $\varepsilon>0$, exists $C_\varepsilon>0$,  such that 
 \begin{eqnarray}
 \label{Eq05Lemma5A}
 |{\rm Re}\{ i\eta\dual{A^{\phi-1}f^2}{\lambda\theta}\}|\leq \varepsilon|\lambda|^2\|\theta\|^2+C_\varepsilon\|A^{\phi-1}f^2\|^2,\\
 \label{Eq06Lemma5A}
 |{\rm Re}\{ -i\eta a^2\dual{\lambda A^\frac{1}{2}u}{A^{\phi-\frac{1}{2}}\theta}|\leq C_\delta\{|\lambda|^2\|A^\frac{1}{2}u\|^2+\|A^{\phi-\frac{1}{2}}\theta\|^2\},
 \end{eqnarray}
 on the other hand, from equation \eqref{Pesp-20}, we have $i\lambda v=w+f^2$ and using Lemma \ref{Lemma4}, we  have
 \begin{eqnarray}
 \nonumber
  |{\rm Re}\{-\eta a^2\beta\dual{i\lambda v}{A^\phi\theta}\}| 
  &= & \eta a^2\beta |{\rm Re}\{\dual{A^\frac{2\phi-1}{2}w}{A^\frac{1}{2}\theta}+\dual{A^\frac{1}{2}f^2}{A^{\phi-\frac{1}{2}}\theta}\}|\\
  \label{Eq07Lemma5A}
  &\leq & C_\delta\{ \|F\|_\mathbb{H}\|U\|_\mathbb{H}+\|F\|^2_\mathbb{H}\},
 \end{eqnarray}
besides, 
\begin{eqnarray}
\label{Eq08Lemma5A}
|{\rm Re}\{i\alpha\eta\dual{f^3}{\lambda A^{\phi-1}\theta}\}|\leq C_\varepsilon\|f^3\|^2+\varepsilon |\lambda|^2\|A^{\phi-1}\theta\|^2,
\end{eqnarray}
using equation \eqref{Pesp-20}, we have 
\begin{eqnarray}
\nonumber
|{\rm Re}\{-\eta\dual{f^4}{A^{\phi-1}i\lambda v}\}| & = &  |{\rm Re}\{-\eta\dual{f^4}{ A^{\phi-1}w+A^{\phi-1}f^2}\}|\\
\nonumber
  &\leq &  C_\delta\{ \|f^4\|\|A^{\phi-1}w\|+\|f^4\|\|A^{\phi-1}f^2\|\}\\
  \label{Eq09Lemma5A}
  &\leq &  C_\delta\{ \|F\|_\mathbb{H}\|U\|_\mathbb{H}+\|F\|^2_\mathbb{H}\}
\end{eqnarray}
 and 
 \begin{eqnarray}
 \label{Eq10Lemma5A}
 |{\rm Re}\{ -\alpha\eta\dual{f^4}{A^{\phi-1}i\lambda w}\}|\leq C_\varepsilon\|F\|^2+\varepsilon|\lambda|^2\|A^{\phi-1}w\|^2\qquad{\rm for}\quad 0\leq\phi\leq 1,
 \end{eqnarray}
 or for $0\leq\phi\leq\frac{3}{4}$ of item $(ii)$ Lemma \ref{Lemma4}
  \begin{eqnarray}
  \nonumber
 |{\rm Re}\{ -\alpha\eta\dual{f^4}{A^{\phi-1}i\lambda w}\}| &\leq &  C_\varepsilon\|F\|^2_\mathbb{H}+\varepsilon\{\|F\|_\mathbb{H}\|U\|_\mathbb{H}+\|F\|^2_\mathbb{H}\}\\
   \label{Eq11Lemma5A}
 & \leq & C_\delta\{\|F\|_\mathbb{H}\|U\|_\mathbb{H}+\|F\|^2_\mathbb{H}\}.
 \end{eqnarray}
 And besides, for $0\leq\phi\leq 1$
 \begin{equation}
 \label{Eq12Lemma5A}
  |{\rm Re} \,i\dual{f^4}{\lambda\theta}|\leq C_\varepsilon\|F\|^2_\mathbb{H}+\varepsilon|\lambda|^2\|\theta\|^2.
\end{equation}  
Therefore, from  the estimates \eqref{Eq05Lemma5A}--\eqref{Eq09Lemma5A} together with the estimates \eqref{Eq11Lemma5A} and  \eqref{Eq12Lemma5A} we conclude the proof of the item $(ii)$ of this lemma.
\\
{\bf Proof.} $(iii)$: Performing the duality product between equation \eqref{Pesp-40} and $\lambda A^{2\phi-3}\theta$,  taking advantage of the self-adjointness of the powers of the operator $A$ and using \eqref{Pesp-20},  we obtain
\begin{multline}
\label{Eq13Lemma5A}
 i |\lambda|^2\|A^\frac{2\phi-3}{2}\theta\|^2  =  -\eta\dual{-iw-if^2}{A^{3\phi-3}\theta}-\alpha\eta\dual{A^\frac{2\phi-1}{2}w}{\lambda A^{2\phi-\frac{5}{2}}\theta}\\
  -\lambda\|A^{\phi-1}\theta\|^2+\dual{f^4}{\lambda A^{2\phi-3}\theta}.
 \end{multline}
 As for $0\leq\phi\leq 1$  we have  $3\theta-3\leq 0<\frac{1}{2}, \; 2\phi-\frac{5}{2}\leq \frac{2\phi-3}{2}$ and $2\phi-3\leq\frac{2\phi-3}{2}$, taking imaginary part in \eqref{Eq13Lemma5A} and using continuous immersions,  for $\varepsilon>0$ exists $C_\varepsilon>0$ such that
 \begin{multline}
 |\lambda|^2\|A^\frac{2\phi-3}{2}\theta\|^2 \leq C_\delta \{ \| w\|^2+\|A^\frac{1}{2}\theta \|^2+\|f^2\| \|\theta\|\}+\varepsilon |\lambda|^2 \|A^\frac{2\phi-3}{2}\theta\|^2
 \\
 +C_\varepsilon \|A^\frac{2\phi-1}{2}w\|^2+C_\varepsilon\|f^4\|^2.
 \end{multline}
 Finally  using estimative \eqref{dis-10} and  Lemma \ref{Lemma3}, we finish the proof of the item $(iv)$ of this Lemma.
\end{proof}
\begin{lemma}\label{Lemma4N}
Let $\delta>0$.  There exists $C_\delta>0$ such that the solutions of the system \eqref{Eq1.1}-\eqref{Eq1.3}  for $|\lambda| > \delta$,  satisfy
\begin{eqnarray}
\label{Item01Lemma4N}
(i)\quad |\lambda|\|w\|^2 &\leq &  C_\delta\{ \|F\|_\mathbb{H}\|U\|_\mathbb{H}+\|F\|^2_\mathbb{H}\}\quad{\rm for}\quad   \dfrac{1}{2}\leq\phi\leq 1,
\\
\label{Item02Lemma4N}
(ii)\quad 
|\lambda|\|A^\frac{1}{2}v\|^2 &\leq & C_\delta \{ |\lambda| \|\theta\|^2 +|\lambda|\|w\|^2+\|F\|_\mathbb{H}\|U\|_\mathbb{H}\}\\
\nonumber
& & +C_\varepsilon|\lambda| \|A^\frac{1}{2}u\|^2+\varepsilon|\lambda|\|A^\frac{1}{2}v\|^2 \quad{\rm for}\quad \dfrac{1}{2}\leq\phi\leq 1.
\end{eqnarray}
\end{lemma}
\begin{proof}
$(i)$: Multiplying by $w$ equation \eqref{Pesp-30} and by $\eta A^{\phi-1}w$ the equation \eqref{Pesp-40} and adding the results, we have
\begin{multline}
\label{Eq000vA}
i\alpha\lambda\|w\|^2+a^2\dual{Au}{i\lambda v-f^2}+a^2\beta\dual{Av}{w}-\eta\dual{A^\phi\theta}{w}\\
+\|w\|^2+i\eta\dual{\lambda\theta}{A^{\phi-1}w}+\eta^2\dual{A^\phi v}{i\lambda A^{\phi-1}v-A^{\phi-1}f^2}\\
+\alpha\eta^2\|A^\frac{2\phi-1}{2}w\|^2+\eta\dual{A^\phi\theta}{w}=\alpha\dual{f^3}{w}+\eta\dual{f^4}{A^{\phi-1}w}.
\end{multline}
On the other hand. Carrying out the following duality products: first between equation \eqref{Pesp-40} and $\frac{a^2\beta}{\alpha\eta} A^{1-\phi}v$, in the sequence between equation \eqref{Pesp-30} and $\frac{1}{\alpha\eta}A^{1-\phi}\theta$ and equation \eqref{Pesp-40} with
$\frac{a^2}{\alpha\eta}A^{1-\phi}u$, taking advantage of the self-adjunction of the powers of operator A, we have
\begin{gather}
\label{Eq001v}
a^2\beta\dual{w}{Av}+i\dfrac{a^2\beta}{\alpha\eta}\dual{\lambda\theta}{A^{1-\phi}v}+\dfrac{a^2\beta}{\alpha}\|A^\frac{1}{2}v\|^2+\dfrac{a^2\beta}{\alpha\eta}\dual{A\theta}{A^{1-\phi}v}=\dfrac{a^2\beta}{\alpha\eta}\dual{f^4}{A^{1-\phi}v}\\
\nonumber
\dfrac{a^2\beta}{\alpha\eta}\dual{A^{1-\phi}v}{A\theta}+i\dfrac{1}{\eta}\dual{\lambda w}{A^{1-\phi}\theta}+\dfrac{a^2}{\alpha\eta}\dual{Au}{A^{1-\phi}\theta}-\dfrac{1}{\alpha}\|A^\frac{1}{2}\theta\|^2\\
\label{Eq002v}
+\dfrac{1}{\alpha\eta}\dual{w}{A^{1-\phi}\theta}=\dfrac{1}{\eta}\dual{f^3}{A^{1-\phi}\theta}\\
\nonumber
\dfrac{a^2}{\alpha\eta}\dual{A^{1-\phi}\theta}{Au} +i\dfrac{a^2}{\alpha\eta}\dual{\lambda\theta}{A^{1-\phi}u}+\dfrac{a^2}{\alpha}\dual{A^\frac{1}{2}v}{A^\frac{1}{2}u}+a^2\dual{i\lambda v-f^2}{Au}\\
\label{Eq003v}
=\dfrac{a^2}{\alpha\eta}\dual{f^4}{A^{1-\phi}u}.
\end{gather}
Adding the equations \eqref{Eq000vA}, \eqref{Eq001v}, \eqref{Eq002v} and \eqref{Eq003v}, we get
\begin{eqnarray}
\nonumber
i\alpha\lambda\|w\|^2 & = & a^2\lambda i\{ \dual{ A^\frac{1}{2}u}{A^\frac{1}{2}v} - \dual{A^\frac{1}{2}v}{A^\frac{1}{2}u}\}+a^2\dual{A^\frac{1}{2}u}{A^\frac{1}{2}f^2}-\|w\|^2\\
\nonumber
& & -a^2\beta\{\dual{Av}{w}+\dual{w}{Av}\}-i\eta\dual{\dfrac{\lambda}{\sqrt{|\lambda|}}A^\frac{\phi-1}{2}\theta}{\sqrt{|\lambda|}A^\frac{\phi-1}{2}w}
\\
\nonumber
& & +i\eta^2\lambda\|A^\frac{2\phi-1}{2}v\|^2 +\eta^2\dual{A^{2\phi-\frac{3}{2}}v}{A^\frac{1}{2}f^2}-\alpha\eta^2\|A^\frac{2\phi-1}{2}w\|^2\\
\label{Eq001Ai}
& & +\eta\dual{f^4}{A^{\phi-1}w}
-i\dfrac{a^2\beta}{\alpha\eta}\dual{\lambda \theta}{A^{1-\phi}v}-\dfrac{a^2\beta}{\alpha}\|A^\frac{1}{2}v\|^2\\
\nonumber
& & +\alpha\dual{f^3}{w}-\dfrac{a^2\beta}{\alpha\eta}\{\dual{A\theta}{A^{1-\phi}v}+\dual{A^{1-\phi}v}{A\theta}\}+\dfrac{a^2\beta}{\alpha\eta}\dual{f^4}{A^{1-\phi}v}\\
\nonumber
& & -i\dfrac{1}{\eta}\dual{\lambda w}{A^{1-\phi}\theta}-\dfrac{a^2}{\alpha\eta}\{\dual{Au}{A^{1-\phi}\theta}+\dual{A^{1-\phi}\theta}{Au}\}+\dfrac{1}{\alpha}\|A^\frac{1}{2}\theta\|^2\\
\nonumber
& & -\dfrac{1}{\alpha\eta}\dual{A^\frac{2\phi-1}{2}w}{A^{\frac{3}{2}-2\phi}\theta}+\dfrac{1}{\eta}\dual{f^3}{A^{1-\phi}\theta}-i\dfrac{a^2}{\alpha\eta}\dual{\dfrac{\lambda}{\sqrt{|\lambda|}}A^{\frac{1}{2}-\phi}\theta}{\sqrt{|\lambda|}A^\frac{1}{2}u}\\
\nonumber
& & -\dfrac{a^2}{\alpha}\dual{A^\frac{1}{2}v}{A^\frac{1}{2}u}+a^2\dual{A^\frac{1}{2}f^2}{A^\frac{1}{2}u}+\dfrac{a^2}{\alpha\eta}\dual{f^4}{A^{1-\phi}u}.
\end{eqnarray}
From the identities: $\forall z \in \mathbb{C} $, it is verified $ {\rm Im} \{z+\overline{z} \} = 0 $ and $ {\rm Im} \{i(z- \overline{z}) \} = 0$, taking the imaginary part in \eqref{Eq001Ai}, we get
\begin{eqnarray}
\nonumber
\alpha\lambda\|w\|^2 & = & {\rm Im}\{a^2\dual{A^\frac{1}{2}u}{A^\frac{1}{2}f^2}-i\eta\dual{\dfrac{\lambda}{\sqrt{|\lambda|}}A^\frac{\phi-1}{2}\theta}{\sqrt{|\lambda|}A^\frac{\phi-1}{2}w}\}+\eta^2\lambda\|A^\frac{2\phi-1}{2}v\|^2
\\
\nonumber
& &+ {\rm  Im}\{ \eta^2\dual{A^{2\phi-\frac{3}{2}}v}{A^\frac{1}{2}f^2}-i\dfrac{a^2\beta}{\alpha\eta}\dual{\lambda\theta}{\lambda A^{1-\phi}v}+\alpha\dual{f^3}{w}\\
\label{Eq002Ai}
& & +\eta\dual{f^4}{A^{\phi-1}w}+\dfrac{a^2\beta}{\alpha\eta}\dual{f^4}{A^{1-\phi}v} -\dfrac{1}{\alpha}\dual{A^\frac{2\phi-1}{2}w}{A^{\frac{3}{2}-2\phi}\theta}\\
\nonumber
& &-i\dfrac{1}{\eta}\dual{\lambda w}{A^{1-\phi}\theta}+\dfrac{1}{\eta}\dual{f^3}{A^{1-\phi}\theta} -i\dfrac{a^2}{\alpha\eta}\dual{\dfrac{\lambda}{\sqrt{|\lambda|}}A^{\frac{1}{2}-\phi}\theta}{\sqrt{|\lambda|}A^\frac{1}{2}u}\\
\nonumber
& &-\dfrac{a^2}{\alpha}\dual{A^\frac{1}{2}v}{A^\frac{1}{2}u} +a^2\dual{A^\frac{1}{2}f^2}{A^\frac{1}{2}u}+\dfrac{a^2}{\alpha\eta}\dual{f^4}{A^{1-\phi}u}\}.
\end{eqnarray}
For $\frac{1}{2}\leq\phi\leq 1$, we have $\frac{1}{2}-\phi\leq\frac{2\phi-1}{2}$, from \eqref{Item04Lemma4} of Lemma \ref{Lemma4},  we have 
\begin{eqnarray}
\label{Eq003Ai}
\hspace*{-0.6cm}\bigg|{\rm Im}\bigg\{-i\dfrac{a^2\beta}{\alpha\eta}\dual{A^\frac{1}{2}\theta}{\lambda A^{\frac{1}{2}-\phi}v} \bigg\} \bigg| 
&\leq & C_\delta\{\|F\|_\mathbb{H}\|U\|_\mathbb{H}+\|F\|^2_\mathbb{H}\}\quad{\rm for}\quad \dfrac{1}{2}\leq \phi\leq 1.
\end{eqnarray}
Besides, using \eqref{Pesp-40} and considering  $\frac{1}{2}\leq \phi\leq \frac{3}{4}$, from estimative \eqref{Item02Lemma4} Lemma \ref{Lemma4}, estimative \eqref{Item03Lemma5A} Lemma \ref{Lemma5A} and estimative \eqref{Item01Lemma4A} Lemma  \ref{Lemma4A},  we have
\begin{eqnarray}
\nonumber
\bigg|{\rm Im}\bigg\{\dfrac{-i}{\eta}\dual{\lambda A^{-\phi} w}{A\theta}\bigg\}\bigg|& \leq & C\{\big |\dual{\lambda A^{-\phi}w}{-i\lambda\theta-\eta A^\phi v-\alpha\eta A^\phi w+f^4}\big |\}\\
\nonumber
&\leq & C\{ |\lambda|^2\|A^{-\phi}w\|^2+|\lambda|^2\|\theta\|^2+\|w\|^2+\|f^4\|^2\}\\
\nonumber
& & +C_\varepsilon|\lambda|\|v\|^2+\varepsilon|\lambda|\|w\|^2\\
\label{Eq004Ai}
&\leq & C_\delta\{\|F\|_\mathbb{H}\|U\|_\mathbb{H}+\|F\|^2_\mathbb{H}\}\quad{\rm for}\quad \dfrac{1}{2}\leq  \phi \leq \dfrac{3}{4}, 
\end{eqnarray}
on the other hand, using \eqref{Pesp-30}, we have
\begin{eqnarray}
\nonumber
\hspace*{-0.4cm}\bigg|{\rm Im}\bigg\{\dfrac{-i}{\eta}\dual{\lambda A^{-\phi} w}{A\theta}\bigg\}\bigg|\hspace*{-0.25cm} &\leq &\hspace*{-0.25cm} C \big|\dual{-a^2Au-a^2\beta Av+\eta A^\phi\theta-w+\alpha f^3}{\lambda A^{-\phi}\theta}   \big|\\
\nonumber
&\leq &\hspace*{-0.35cm} C\big| -a^2\dual{\sqrt{|\lambda|}A^\frac{1}{2}u}{\dfrac{\lambda}{\sqrt{|\lambda|}} A^{\frac{1}{2}-\phi}\theta}-a^2\beta\dual{\lambda A^\frac{2\phi-1}{2}v}{A^{\frac{3}{2}-2\phi}\theta}\\
\nonumber
& & +\eta\lambda\|\theta\|^2-\dual{\sqrt{|\lambda|}w}{\dfrac{\lambda}{\sqrt{|\lambda|}}A^{-\phi}\theta}+\alpha\dual{f^3}{\lambda A^{-\phi}\theta}\big|,
\end{eqnarray}
considering  $\frac{3}{4}\leq\phi\leq 1$, we have $\frac{1}{2}-\phi\leq 0$,  $-\phi<0$, $\frac{3}{2}-2\phi<\frac{1}{2}$ and   $-\phi\leq \phi-\frac{3}{2}$, applying Cauchy-Schwarz and Young inequalities, continuous immersion and estimates \eqref{dis-10}, \eqref{Item01Lemma3} of Lemma \ref{Lemma3}, \eqref{Item02Lemma4A} of Lemma \ref{Lemma4A},  \eqref{Item01Lemma5A} and \eqref{Item04Lemma54} of Lemma \ref{Lemma5A}, for $\varepsilon>0$ exists $C_\varepsilon>0$ such that
\begin{multline}
\label{Eq004Bi}
\bigg|{\rm Im}\bigg\{\dfrac{-i}{\eta}\dual{\lambda A^{-\phi} w}{A\theta}\bigg\}\bigg|\leq  C_\delta\{\|F\|_\mathbb{H}\|U\|_\mathbb{H}+\|F\|^2_\mathbb{H}\}\\+\varepsilon|\lambda|\|w\|^2\quad{\rm for}\quad \dfrac{3}{4}\leq \phi\leq 1.
\end{multline}
And,  from $\frac{1}{2}\leq \phi\leq 1$, we have $1-\phi\leq\frac{1}{2}$, then
\begin{multline}
\label{Eq005Ai}
\big|\frac{1}{\eta}{\rm Im}\dual{f^3}{A^{1-\phi}\theta}\big|\leq C\{\|f^3\|^2+\|A^\frac{1}{2}\theta\|^2\}\\
\leq C_\delta\{\|F\|_\mathbb{H}\|U\|_\mathbb{H}+\|F\|^2_\mathbb{H}\}\quad {\rm for}\quad\dfrac{1}{2}\leq \phi\leq 1.
\end{multline}
Applying Cauchy-Schwarz and Young inequalities in \eqref{Eq002Ai} and using estimates \eqref{Eq004Ai}--\eqref{Eq005Ai}, for $\varepsilon>0$ exists $C_\varepsilon>0$, such that 
\begin{eqnarray*}
|\lambda|\|w\|^2 &\leq & C_\delta\{ \|A^\frac{1}{2}u\|\|A^\frac{1}{2}f^2\|+|\lambda|\|A^\frac{2\phi-1}{2}v\|^2+\|A^{2\phi-\frac{3}{2}}v\|\|A^\frac{1}{2}f^2\|+\|f^4\|\|A^{\phi-1}w\|\\
& &+\|f^3\|\|w\|+\|f^4\|\|A^{1-\phi}v\|^2+\|A^\frac{2\phi-1}{2}w\|^2+\|A^{\frac{3}{2}-2\phi}\theta\|^2+ |\lambda|\|A^{\frac{1}{2}-\phi}\theta\|^2
\\
& &+|\lambda|\|A^\frac{1}{2}u\|^2+\|A^\frac{1}{2}v\|^2 +\|A^\frac{1}{2}u\|^2+\|A^\frac{1}{2}f^2\|\|A^\frac{1}{2}u\|+\|f^4\|\|A^{1-\phi}u\|
\\
& & +C_\varepsilon |\lambda| \| A^\frac{\phi-1}{2}\theta\|^2 +\varepsilon |\lambda|\{  \|A^\frac{\phi-1}{2}w\|^2+\|w\|^2\}+ C_\delta\{\|F\|_\mathbb{H}\|U\|_\mathbb{H}+\|F\|^2_\mathbb{H}\}.
\end{eqnarray*}
For $\frac{1}{2}\leq \phi\leq 1$, we have:  $2\phi-\frac{3}{2}\leq\frac{1}{2}, \;\frac{\phi}{2}-1< \frac{1}{2}$,  $\phi-1\leq\frac{\phi-1}{2}\leq 0<\frac{1}{2}$,  $\frac{3}{2}-2\phi\leq\frac{1}{2}$, \\
$\frac{1}{2}-\phi\leq 0$, $1-\phi\leq \frac{1}{2}$, using continuous immersion, finish proof this item.
\\
{\bf Proof.} $(ii)$: Performing the duality product between $a^2\beta Av$ and $w$ e using  equation \eqref{Pesp-20} and  advantage of the self-adjointness of the powers of the operator A, we obtain
\begin{multline}
\label{Eq000iii}
a^2\beta\dual{Av}{w}=a^2\beta\dual{A^\frac{1}{2}v}{i\lambda A^\frac{1}{2}v-A^\frac{1}{2}f^2}=-ia^2\beta\lambda\|A^\frac{1}{2}v\|^2-a^2\beta\dual{A^\frac{1}{2}v}{A^\frac{1}{2}f^2}.
\end{multline}
On the other hand, multiplying by $w$ equation \eqref{Pesp-30} and by $\eta A^{\phi-1}w$ the equation \eqref{Pesp-40} and adding the results, we have
\begin{multline}
\label{Eq000v}
i\alpha\lambda\|w\|^2+a^2\dual{Au}{i\lambda v-f^2}+a^2\beta\dual{Av}{w}-\eta\dual{A^\phi\theta}{w}\\
+\|w\|^2+i\eta\dual{\lambda\theta}{A^{\phi-1}w}+\eta^2\dual{A^\phi v}{i\lambda A^{\phi-1}v-A^{\phi-1}f^2}\\
+\alpha\eta^2\|A^\frac{2\phi-1}{2}w\|^2+\eta\dual{A^\phi\theta}{w}=\alpha\dual{f^3}{w}+\eta\dual{f^4}{A^{\phi-1}w}.
\end{multline}
 Taking imaginary part in \eqref{Eq000v} and using \eqref{Eq000iii}, we have
\begin{multline}
\label{Eq001ii}
\alpha\lambda\|w\|^2=a^2\beta\lambda\|A^\frac{1}{2}v\|^2+\eta^2\lambda\|A^\frac{2\phi-1}{2}v\|^2 +{\rm Im}\{a^2\beta\dual{A^\frac{1}{2}v}{A^\frac{1}{2}f^2}\\
+ia^2\dual{\sqrt{|\lambda|}A^\frac{1}{2}u}{\dfrac{\lambda}{\sqrt{|\lambda|}} A^\frac{1}{2}v}
+a^2\dual{A^\frac{1}{2}u}{A^\frac{1}{2} f^2}+\eta^2\dual{A^{2\phi-\frac{3}{2}}v}{A^\frac{1}{2}f^2}\\
-i\eta\dual{\dfrac{\lambda}{\sqrt{|\lambda|}}\theta}{\sqrt{|\lambda|}A^{\phi-1}w} +\alpha\dual{f^3}{w}+\eta\dual{f^4}{A^{\phi-1}w}\}.
\end{multline}
Then
\begin{multline}
\label{Eq002ii}
a^2\beta\lambda\|A^\frac{1}{2}v\|^2+\eta^2\lambda\|A^\frac{2\phi-1}{2}v\|^2=\alpha\lambda\|w\|^2-{\rm Im}\{a^2\beta\dual{A^\frac{1}{2}v}{A^\frac{1}{2}f^2}\\
+ia^2\dual{\sqrt{|\lambda|}A^\frac{1}{2}u}{\dfrac{\lambda}{\sqrt{|\lambda|}} A^\frac{1}{2}v}
+a^2\dual{A^\frac{1}{2}u}{A^\frac{1}{2} f^2}+\eta^2\dual{A^{2\phi-\frac{3}{2}}v}{A^\frac{1}{2}f^2}\\
-i\eta\dual{\dfrac{\lambda}{\sqrt{|\lambda|}}\theta}{\sqrt{|\lambda|}A^{\phi-1}w} +\alpha\dual{f^3}{w}+\eta\dual{f^4}{A^{\phi-1}w}\}.
\end{multline}
Therefore, of \eqref{Eq002ii}, as for $\frac{1}{2}\leq\phi\leq 1$ we have $\frac{2\phi-1}{2}\leq\frac{1}{2}$,  $2\phi-\frac{3}{2}\leq \frac{1}{2}$, $\phi-1\leq 0$  and $\phi-1<\frac{\phi-1}{2}$,  applying continuous immersion,   Cauchy-Schwarz and Young inequalities, for $\varepsilon>0$ exists $C_\varepsilon>0$, finish proof this item. 
\end{proof}
\begin{theorem}\label{AnaliticidadeVIBRA}
The semigroup $S(t)=e^{t\mathbb{B}}$  is analytic for $\phi=1$.
\end{theorem}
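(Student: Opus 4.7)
\emph{Reduction.} The plan is to apply Theorem \ref{LiuZAnaliticity}. The condition $i\R\subset\rho(\mathbb{B})$ was already verified in Lemma \ref{EImaginary} for every $\phi\in[0,1]$, hence in particular for $\phi=1$. Arguing exactly as in Remark \ref{EqvExponential}, condition \eqref{Analyticity} reduces to producing a constant $C_\delta>0$ such that
$$|\lambda|\,\|U\|_\mathbb{H}^{2}\leq C_\delta\|F\|_\mathbb{H}\|U\|_\mathbb{H}\qquad(|\lambda|>\delta),$$
whenever $(i\lambda I-\mathbb{B})U=F$ with $\phi=1$. Equivalently, every estimate from Lemmas \ref{Lemma3}, \ref{Lemma6}, \ref{Lemma6A}, \ref{Lemma7} must be upgraded by an extra factor of $|\lambda|$ on the left-hand side.

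\emph{Strategy.} The structural gain available at $\phi=1$ is that, after eliminating $w$ via $w=i\lambda v-f^{2}$ from \eqref{Pesp-20}, the $\theta$-equation \eqref{Pesp-40} takes the form
$$(i\lambda+A)\theta+\eta(1+i\alpha\lambda)Av = f^{4}+\alpha\eta Af^{2}, \qquad (\mathrm{I})$$
while a parallel manipulation of \eqref{Pesp-30} using \eqref{Pesp-10}--\eqref{Pesp-20} yields
$$i\lambda(v+\alpha w)+a^{2}A(u+\beta v) = \eta A\theta+\alpha f^{3}+f^{2}. \qquad (\mathrm{II})$$
The $\lambda$-dependent prefactors $(1+i\alpha\lambda)$ and $i\lambda$ multiplying the principal unknowns $Av$ and $v+\alpha w$ are precisely what, upon pairing with the appropriate test functions and extracting real and imaginary parts, will inject the desired factor $|\lambda|$ into the resulting squared-norm estimates. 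I would then target four analytic refinements in sequence: (i) $|\lambda|\|A^{1/2}v\|^{2}\leq C_\delta\|F\|_\mathbb{H}\|U\|_\mathbb{H}$, by applying $A^{-1/2}$ to $(\mathrm{I})$ and pairing with $A^{1/2}v$; (ii) $|\lambda|\|A^{1/2}u+\alpha A^{1/2}v\|^{2}\leq C_\delta\|F\|_\mathbb{H}\|U\|_\mathbb{H}$, from (i), \eqref{Pesp-10} and Lemma \ref{Lemma3}; (iii) $|\lambda|\|\theta\|^{2}\leq C_\delta\|F\|_\mathbb{H}\|U\|_\mathbb{H}$, by pairing $(\mathrm{I})$ with $\theta$ and absorbing the coupling cross-term via (i) and a weighted Young inequality; and (iv) $|\lambda|\|v+\alpha w\|^{2}\leq C_\delta\|F\|_\mathbb{H}\|U\|_\mathbb{H}$, by pairing $A^{-1/2}(\mathrm{II})$ with $A^{1/2}(v+\alpha w)$, using (i)--(iii) together with the dissipation estimate \eqref{dis-10} to control the right-hand side. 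Summing the four bounds produces $|\lambda|\,\|U\|_\mathbb{H}^{2}\leq C_\delta\|F\|_\mathbb{H}\|U\|_\mathbb{H}$, which is what has to be proved.

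\emph{Main obstacle.} The technical heart is step (iii). Pairing $(\mathrm{I})$ with $\theta$ produces a cross term of the form $\eta\alpha\lambda\,\mathrm{Re}\langle A^{1/2}v,A^{1/2}\theta\rangle$ carrying a $|\lambda|$-factor of the same order as the target $\lambda\|\theta\|^{2}$. Absorbing it requires a delicately weighted Young inequality that invokes the a priori bound on $|\lambda|\|A^{1/2}v\|^{2}$ from step (i) together with the dissipation control on $\|A^{1/2}\theta\|^{2}$; equivalently, one squares the identity $(\mathrm{I})$ in $\mathfrak{D}(A^{-1/2})$ and recognises, after completion of squares, the controlling quadratic involving $\lambda^{2}\|A^{-1/2}\theta+\eta\alpha A^{1/2}v\|^{2}$. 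This closure succeeds only because at $\phi=1$ the powers of $A$ on the two sides of the coupling in \eqref{Pesp-30}--\eqref{Pesp-40} match exactly, so no genuine loss in the scale of spaces $\mathfrak{D}(A^{\tau})$ is incurred. When $\phi<1$ this matching is destroyed, which is precisely the mechanism that the Weyl-type construction of the next subsection will exploit to exhibit lack of analyticity.
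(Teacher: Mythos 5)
Your reduction and the four-estimate architecture (one $|\lambda|$-weighted bound per term of $\|U\|_{\mathbb H}^2$) are reasonable, and steps (i), (ii), (iv) can be made to work, but there is a genuine gap precisely at what you call the technical heart, step (iii), together with a missing auxiliary estimate on which steps (iii) and (iv) both secretly depend. Pairing $(\mathrm{I})$ with $\theta$ and taking imaginary parts leaves the term $\alpha\eta\lambda\,\mathrm{Re}\langle A^{1/2}v,A^{1/2}\theta\rangle$, and the weighted Young inequality you describe cannot absorb it: the ingredients you cite give at best
$$|\lambda|\,|\langle A^{1/2}v,A^{1/2}\theta\rangle|\le\bigl(|\lambda|\|A^{1/2}v\|^2\bigr)^{1/2}\bigl(|\lambda|\|A^{1/2}\theta\|^2\bigr)^{1/2}\le C|\lambda|^{1/2}\|F\|_{\mathbb H}\|U\|_{\mathbb H},$$
because the dissipation \eqref{dis-10} controls $\|A^{1/2}\theta\|^2$ but not $|\lambda|\|A^{1/2}\theta\|^2$; any other splitting $\varepsilon a^2+C_\varepsilon b^2$ leaves either $\lambda^2\|A^{1/2}v\|^2$ or $|\lambda|\|A^{1/2}\theta\|^2$ uncontrolled, and neither can be absorbed into the target $\lambda\|\theta\|^2$. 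The paper sidesteps this term entirely by taking the multiplier $\lambda A^{-1}\theta$ instead of $\theta$ (Lemma \ref{Lemma5A}): after substituting $i\lambda v=w+f^2$ and $i\alpha\lambda w=-a^2Au-a^2\beta Av+\eta A^{\phi}\theta-w+\alpha f^3$, the dangerous contributions become purely imaginary and disappear upon taking real parts. Your pairing can alternatively be rescued by writing $i\alpha\lambda Av=\alpha Aw+\alpha Af^2$, but that trades the problem for the bound $\|A^{1/2}w\|^2\le C_\delta\|F\|_{\mathbb H}\|U\|_{\mathbb H}$, which brings us to the second gap.

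That bound is the paper's Lemma \ref{Lemma4} specialized to $\phi=1$ (obtained by testing \eqref{Pesp-40} against $A^{\phi-1}w$), and it is also indispensable for your step (iv): pairing $A^{-1/2}(\mathrm{II})$ with $A^{1/2}(v+\alpha w)$ produces the cross terms $\langle A^{1/2}(u+\beta v),A^{1/2}(v+\alpha w)\rangle$ and $\langle A^{1/2}\theta,A^{1/2}(v+\alpha w)\rangle$, both of which require $\|A^{1/2}w\|$ to be under control, and nothing in your steps (i)--(iii) or in the exponential-stability lemmas supplies this. Once Lemma \ref{Lemma4} is added and step (iii) is repaired as above, your outline does become a viable and arguably shorter alternative to the paper's route, which instead proves $|\lambda|\|w\|^2\le C_\delta\|F\|_{\mathbb H}\|U\|_{\mathbb H}$ via the Liu--Renardy decomposition $w=w_1+w_2$ and interpolation (Lemma \ref{Lemma5}) and then extracts $|\lambda|\|A^{1/2}v\|^2$ from the identity \eqref{Eq007Exponential}. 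Finally, a smaller point: in your step (i) the term $i\lambda\langle A^{-1/2}\theta,A^{1/2}v\rangle=i\lambda\langle\theta,v\rangle$ must first be rewritten as $-\langle\theta,w+f^2\rangle$ using \eqref{Pesp-20}; as written it carries the same dangerous explicit factor of $\lambda$ that you flag in step (iii).
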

 \begin{proof}
 From Lemma\eqref{EImaginary}, \eqref{RBEixoIm} is verified.  Therefore, it remains to prove \eqref{Analyticity}, for that it is enough to show that,  let $\delta>0$. There exists a constant $C_\delta>0$ such that the solutions of the system \eqref{Eq1.1}-\eqref{Eq1.3} for $|\lambda|>\delta$,  satisfy the inequality 
 \begin{equation}\label{EquiAnalyticity}
 |\lambda|\dfrac{\|U\|_\mathbb{H}}{\|F\|_\mathbb{H}}\leq C_\delta\quad\Longleftrightarrow\quad |\lambda|\|U\|^2_\mathbb{H}\leq C_\delta\|F\|_\mathbb{H}\|U\|_\mathbb{H}<\infty.
 \end{equation}

Finally,   considering $\phi=1$  and  using   item $(i)$ the Lemmas \ref{Lemma3}, \ref{Lemma4A} and  \ref{Lemma5A}, Lemma \ref{Lemma4N} and item $(iii)$ estimative \eqref{Item01Lemma4N} of Lemma \ref{Lemma4N}, finish the proof of this theorem.
\end{proof}  
\subsection{Gevrey Class}
\label{4.2}
Before exposing our results, it is useful to recall the next definition and result  presented in \cite{Tebou2020} (adapted from
\cite{TaylorM}, Theorem 4, p. 153]).

\begin{definition}\label{Def1.1Tebou} Let $t_0\geq 0$ be a real number. A strongly continuous semigroup $S(t)$, defined on a Banach space $ \mathbb{H}$, is of Gevrey class $s > 1$ for $t > t_0$, if $S(t)$ is infinitely differentiable for $t > t_0$, and for every compact set $K \subset (t_0,\infty)$ and each $\mu > 0$, there exists a constant $ C = C(\mu, K) > 0$ such that
    \begin{equation}\label{DesigDef1.1}
    ||S^{(n)}(t)||_{\mathcal{L}( \mathbb{H})} \leq  C\mu ^n(n!)^s,  \text{ for all } \quad t \in K, n = 0,1,2...
    \end{equation}
\end{definition}
\begin{theorem}[\cite{TaylorM}]\label{Theorem1.2Tebon}
    Let $S(t)$  be a strongly continuous and bounded semigroup on a Hilbert space $ \mathbb{H}$. Suppose that the infinitesimal generator $\mathbb{B}$ of the semigroup $S(t)$ satisfies the following estimate, for some $0 < \Psi < 1$:
    \begin{equation}\label{Eq1.5Tebon2020}
    \lim\limits_{|\lambda|\to\infty} \sup |\lambda |^\Psi ||(i\lambda I-\mathbb{B})^{-1}||_{\mathcal{L}( \mathbb{H})} < \infty.
    \end{equation}
    Then $S(t)$  is of Gevrey  class  $s$   for $t>0$,  for every   $s >\dfrac{1}{\Psi}$.
\end{theorem}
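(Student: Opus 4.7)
The plan is to combine a Dunford--Cauchy contour representation of the semigroup with a sectorial extension of the resolvent bound. From the hypothesis $\|R(i\tau,\mathbb{B})\|_{\mathcal{L}(\mathbb{H})}\leq C|\tau|^{-\Psi}$ for $|\tau|\geq\tau_{0}$, the first step is to extend the resolvent to a parabolic neighborhood of $i\mathbb{R}$. Writing $\mu=\sigma+i\tau$ and applying the Neumann series
\begin{equation*}
R(\mu,\mathbb{B})=\sum_{k\geq 0}(-\sigma)^{k}R(i\tau,\mathbb{B})^{k+1},
\end{equation*}
one sees that it converges whenever $|\sigma|\,\|R(i\tau,\mathbb{B})\|<1$. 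Thus $R(\mu,\mathbb{B})$ exists and is bounded by $2C|\tau|^{-\Psi}$ on the region $\Omega_{c}=\{\sigma+i\tau:|\sigma|\leq c|\tau|^{\Psi},\;|\tau|\geq\tau_{0}\}$ for a suitable $c>0$, and boundedness of $S(t)$ gives $\rho(\mathbb{B})$ in a right half-plane as well.

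Next I would use the standard formula
\begin{equation*}
S^{(n)}(t)=\frac{1}{2\pi i}\int_{\Gamma}\lambda^{n}e^{t\lambda}R(\lambda,\mathbb{B})\,d\lambda,\qquad t>0,\ n\in\mathbb{N},
\end{equation*}
and deform $\Gamma$ so that, outside a compact set, it coincides with the curve $\sigma=-c|\tau|^{\Psi}$. Along this contour $|e^{t\lambda}|=e^{-ct|\tau|^{\Psi}}$ decays faster than any polynomial, so the integral converges absolutely and may be differentiated under the integral sign. Inserting the resolvent bound and using $|\lambda|\lesssim |\tau|$ on the contour (since $\Psi<1$), the estimation of $\|S^{(n)}(t)\|$ reduces to bounding an integral of the form $\int_{\tau_{0}}^{\infty}|\tau|^{n-\Psi}e^{-ct|\tau|^{\Psi}}\,d|\tau|$. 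The substitution $\xi=ct|\tau|^{\Psi}$ converts this into a Gamma-function integral of order roughly $n/\Psi$, and Stirling's formula then produces a bound of order $C(t)\,[c't^{-1/\Psi}]^{n}(n!)^{1/\Psi}$. Since the hypothesis allows any $s>1/\Psi$, for every prescribed $\mu>0$ one has $(n!)^{1/\Psi}\leq\mu^{n}(n!)^{s}$ for $n$ large enough, which yields the Gevrey bound of Definition~\ref{Def1.1Tebou} uniformly on compact $K\subset(0,\infty)$.

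The main obstacle I expect is the quantitative optimization of the contour: the shape $\sigma=-c|\tau|^{\Psi}$ is dictated by the region of validity of the extended resolvent estimate, but one has to verify that the exponential decay $e^{-ct|\tau|^{\Psi}}$ dominates the growth $|\lambda|^{n}$ with constants that yield precisely the Gevrey exponent $1/\Psi$ in Stirling's formula, and that these constants remain uniform for $t$ in a compact subset of $(0,\infty)$. A secondary technical issue is handling the compact part of $\Gamma$ near $\tau=0$, which contributes a bounded term absorbed into the prefactor $C(\mu,K)$.
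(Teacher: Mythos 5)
The paper does not actually prove Theorem \ref{Theorem1.2Tebon}: it is quoted from Taylor's thesis \cite{TaylorM} as presented in \cite{Tebou2020}, so there is no internal argument to compare against. Your proposal is, in substance, the standard proof of this criterion and it is correct: the Neumann-series extension of the resolvent to the parabolic region $|\sigma|\leq c|\tau|^{\Psi}$ (with the uniform bound $2C|\tau|^{-\Psi}$ there), the deformation of the inverse Laplace contour onto $\sigma=-c|\tau|^{\Psi}$ away from a compact neighbourhood of the origin, and the Gamma-function/Stirling computation yielding $\|S^{(n)}(t)\|\leq C(t)\,(c't^{-1/\Psi})^{n}(n!)^{1/\Psi}$ are exactly the ingredients of Taylor's argument. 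Two points deserve to be made explicit if you write this up. First, the justification that the contour integral actually represents $S(t)$ (and hence that differentiation under the integral gives $S^{(n)}(t)$) is not automatic for a merely bounded semigroup, since along the vertical line the resolvent decays only like $|\tau|^{-\Psi}$ and the inversion integral is not absolutely convergent before the deformation; the usual remedy is to establish the representation first for initial data in $\mathfrak{D}(\mathbb{B}^{2})$, where absolute convergence holds, and then extend by density using the uniform operator bound obtained after deforming the contour. Second, in the final absorption step you must control the full prefactor $(c't^{-1/\Psi})^{n}$ together with $(n!)^{1/\Psi}$: the correct statement is that $(c't^{-1/\Psi}/\mu)^{n}(n!)^{1/\Psi-s}\to 0$ for every $\mu>0$ because $1/\Psi-s<0$, uniformly for $t$ in a compact subset of $(0,\infty)$ where $t^{-1/\Psi}$ is bounded; as written, your inequality $(n!)^{1/\Psi}\leq\mu^{n}(n!)^{s}$ omits that geometric factor. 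Neither point is a gap in the idea, only in the bookkeeping.
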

Our main result in this subsection is as follows:
\begin{theorem} \label{TGevreyC}
Let  $S(t)=e^{t\mathbb{B}}$  strongly continuos-semigroups of contractions on the Hilbert space $ \mathbb{H}$, the semigroups $S(t)$ is of Gevrey class $s_i$  $i=1,2$,  for every $s_1> \frac{1}{\Psi_1(\phi)}=\frac{1}{2}$ for $\phi\in (\frac{1}{2},\frac{2}{3})$ and $s_2>\frac{1}{\Psi_2(\phi)}=\frac{\phi}{2\phi-1}$ for $\phi\in[\frac{2}{3},1)$,  such that we have the resolvent estimative:
  \begin{equation}\label{Gevrey01}
(i)\quad   \limsup_{|\lambda|\to\infty} |\lambda |^\frac{1}{2} ||(i\lambda I-\mathbb{B})^{-1}||_{\mathcal{L}( \mathbb{H})} <\infty, \quad {\rm for}\quad \frac{1}{2}<\phi\leq\frac{2}{3},
    \end{equation}
    \begin{equation}\label{Gevrey02}
(ii)\quad   \limsup_{|\lambda|\to\infty} |\lambda |^\frac{2\phi-1}{\phi} ||(i\lambda I-\mathbb{B})^{-1}||_{\mathcal{L}( \mathbb{H})} <\infty, \quad {\rm for}\quad \frac{2}{3}\leq\phi< 1.
 \end{equation}
\end{theorem}
\begin{proof} $(i)$: Note that the estimate   
\begin{equation}\label{EqEquiv1.6TS1}
|\lambda|^\frac{1}{2} ||(i\lambda I-\mathbb{B})^{-1}F||_\mathbb{H}=|\lambda|^\frac{1}{2}\|U\|_\mathbb{H}\leq C_\delta\|F\|_\mathbb{H}\quad{\rm for}\quad \dfrac{1}{2}\leq \phi\leq 1
\end{equation}
 implies the inequality \eqref{Gevrey01}.  Therefore from now on we will show \eqref{EqEquiv1.6TS1},  for this purpose let us estimate the term $|\lambda|\|U\|^2_\mathbb{H}$:  Using  estimative \eqref{Item01Lemma4N} Lemma \ref{Lemma4} and \eqref{Item01Lemma5A} of Lemma \ref{Lemma5A} in \eqref{Item02Lemma4N}, we have
  \begin{equation}
  \label{EstimaLA12V}
  |\lambda|\|A^\frac{1}{2}v\|^2\leq C_\delta\{\|F\|_\mathbb{H}\|U\|_\mathbb{H}+\|F\|^2_\mathbb{H}\}\qquad{\rm for}\qquad \dfrac{1}{2}\leq\phi\leq 1.
  \end{equation}
  Using estimates \eqref{Item01Lemma3}, \eqref{Item01Lemma4N}, \eqref{Item01Lemma5A} and \eqref{EstimaLA12V}, for $\varepsilon>0$ exists $C_\varepsilon>0$,  such that
  \begin{equation*}
  |\lambda|\|U\|^2_\mathbb{H}\leq C_\varepsilon\|F\|^2_\mathbb{H}+\varepsilon\|U\|^2\qquad{\rm for}\qquad \dfrac{1}{2}\leq \phi\leq 1.
  \end{equation*}
  Hence, in particular for $\phi\in (\frac{1}{2},\frac{3}{2}]$, we arrive at
  \begin{equation}
  \label{Gevrey01A}
  |\lambda|^\frac{1}{2}\|U\|_\mathbb{H}\leq C_\varepsilon\|F\|_\mathbb{H}\qquad{\rm for}\qquad \dfrac{1}{2}< \phi\leq \dfrac{2}{3}.
  \end{equation}
 {\bf Proof:} $(ii)$:  We assume $\lambda\in \mathbb{R}$ with  $|\lambda|\geq 1$, we shall borrow some ideas from \cite{LiuR95}.   Let us decompose $w$ as $w=w_1+w_2$,  such that
  \begin{multline}\label{Eq110AnalyS1}
    i\alpha\lambda w_1+Aw_1=\alpha f^3 \quad{\rm in}\quad \mathfrak{D}(A^0)\qquad{\rm and}\\  i\alpha \lambda w_2=-a^2Au-a^2\beta Av+\eta A^\phi\theta-w+Aw_1\quad{\rm in}\quad   \mathfrak{D}(A^0).
    \end{multline}
    Firstly,  applying the product duality  the first equation in \eqref{Eq110AnalyS1}$_1$ by $w_1$, then by $Aw_1$,   we have 
    \begin{eqnarray}\label{Eq112AnalyS1}
    |\lambda|\|w_1\| +|\lambda|^\frac{1}{2}\|A^\frac{1}{2}w_1\|+\|Aw_1\|\leq C\|F\|_\mathbb{H}.
    \end{eqnarray}
  As for $\frac{1}{2}\leq\phi\leq 1$  we have: $0\leq\frac{2\phi-1}{2}\leq\frac{1}{2}$ and   from $A^\frac{2\phi-1}{2}w_2=A^\frac{2\phi-1}{2}w-A^\frac{2\phi-1}{2}w_1$,   using estimative \eqref{Eq112AnalyS1} and  estimative \eqref{Item01Lemma4} Lemma \ref{Lemma4},  we have
\begin{eqnarray*}
\|A^\frac{2\phi-1}{2}w_2\|^2  & \leq & C_\delta\{ \|A^\frac{2\phi-1}{2}w\|^2+\|A^\frac{2\phi-1}{2}w_1\|^2 \}\\
\nonumber
&\leq & C_\delta\{ \|F\|_\mathbb{H}\|U\|_\mathbb{H}+|\lambda|^{-1}\|F\|^2_\mathbb{H}\}, 
\end{eqnarray*}
as for $\frac{1}{2}\leq \phi \leq 1$, we have $-1\leq\frac{1-2\phi}{\phi}$, then
\begin{multline}
\label{Eq000Gevrey2S}
\|A^\frac{2\phi-1}{2}w_2\|^2 \leq  C_\delta\{ \|F\|_\mathbb{H}\|U\|_\mathbb{H}+|\lambda|^\frac{1-2\phi}{\phi}\|F\|^2_\mathbb{H}\} \\
\leq C_\delta|\lambda|^\frac{1-2\phi}{\phi}\{|\lambda|^\frac{2\phi-1}{\phi}\|F\|_\mathbb{H}\|U\|_\mathbb{H}+\|F\|^2_\mathbb{H}\}\quad  {\rm for}\quad \frac{1}{2}\leq \phi \leq 1.
\end{multline}
Now  applying the operator $A^{-\frac{1}{2}}$ in the second equation of  \eqref{Eq110AnalyS1} and using     Lemmas \eqref{Lemma3} and \eqref{Lemma4},  estimates \eqref{dis-10}    and  \eqref{Eq112AnalyS1} and as $-1\leq\frac{1-2\phi}{\phi}$,  we have
    \begin{eqnarray*}
     |\lambda|^2\|A^{-\frac{1}{2}}w_2\|^2  &\leq & C\{ \|A^\frac{1}{2}u\|^2+\|A^\frac{1}{2}v\|^2+\|A^{\phi-\frac{1}{2}}\theta\|^2+\|A^{-\frac{1}{2}}w\|^2\}
+\|A^\frac{1}{2}w_1\|^2\\
& \leq & C_\delta|\lambda|^\frac{1-2\phi}{\phi}\{|\lambda|^\frac{2\phi-1}{\phi}\|F\|_\mathbb{H}\|U\|_\mathbb{H}+\|F\|^2_\mathbb{H}\}\quad  {\rm for}\quad \frac{1}{2}\leq \phi \leq 1.
\end{eqnarray*}
Therefore
\begin{equation}
\label{Eq01GevreyS1}
\|A^{-\frac{1}{2}}w_2\|^2\leq C_\delta |\lambda|^{-[\frac{4\phi-1}{\phi}]}\{|\lambda|^\frac{2\phi-1}{\phi}\|F\|_\mathbb{H}\|U\|_\mathbb{H}+\|F\|^2_\mathbb{H}\}\qquad{\rm for} \qquad\dfrac{1}{2}\leq\phi\leq 1.
    \end{equation}
 By interpolations inequality Theorem \ref{Lions-Landau-Kolmogorov}, since  $-\frac{1}{2}< 0\leq\frac{2\phi-1}{2}$,  using  estimates   \eqref{Eq000Gevrey2S} and \eqref{Eq01GevreyS1},   we derive
 \begin{multline}
 \label{Eq01GevreyS1A}
 \|w_2\|^2 \leq  C(\|A^{-\frac{1}{2}}w_2\|^2)^\frac{2\phi-1}{2\phi}(\|A^\frac{2\phi-1}{2}w_2\|^2)^\frac{1}{2\phi}\\
 \leq C_\delta |\lambda|^{-\big[\big(\frac{4\phi-1}{\phi}\big)\big(\frac{2\phi-1}{2\phi}\big)+\big( \frac{2\phi-1}{\phi}\big)\big(\frac{1}{2\phi}\big)\big)\big]}\big[|\lambda|^\frac{2\phi-1}{\phi}\|F\|_\mathbb{H}\|U\|_\mathbb{H}+\|F\|^2_\mathbb{H}\big]\\
 \leq  C_\delta |\lambda|^\frac{2-4\phi}{\phi} \big[|\lambda|^\frac{2\phi-1}{\phi}\|F\|_\mathbb{H}\|U\|_\mathbb{H}+\|F\|^2_\mathbb{H}\big]  \quad{\rm for}\quad  \dfrac{1}{2}\leq \phi\leq 1.
 \end{multline}
 On the other hand,  as $\|w\|^2 \leq C\{\|w_1\|^2+\|w_2\|^2\}$ and as for $\frac{1}{2}\leq \phi\leq 1$ we have  $-2\leq \frac{2-4\phi}{\phi}$,  using estimates \eqref{Eq112AnalyS1} and \eqref{Eq01GevreyS1A},  we get
\begin{equation*}
\|w\|^2\leq C_\delta|\lambda|^\frac{2-4\phi}{\phi}\big[ |\lambda|^\frac{2\phi-1}{\phi}\|F\|_\mathbb{H}\|U\|_\mathbb{H}+\|F\|^2_\mathbb{H}\big ]      \quad{\rm for }\quad \dfrac{1}{2}\leq \phi\leq 1,
\end{equation*} 
 equivalently
 \begin{eqnarray}
  \label{Eq02GevreyS1}
 |\lambda|\|w\|^2 &\leq &C_\delta|\lambda|^{\frac{2-3\phi}{\phi}} \big[ |\lambda|^\frac{2\phi-1}{\phi}\|F\|_\mathbb{H}\|U\|_\mathbb{H}+\|F\|^2_\mathbb{H}\big ]      \quad{\rm for }\quad \dfrac{1}{2}\leq \phi\leq 1.
 \end{eqnarray}
 Now applying the estimate \eqref{Eq02GevreyS1} in   estimative 
  \eqref{Item02Lemma4N} of Lemma \ref{Lemma4N} for $\frac{1}{2}\leq \phi\leq 1$,  we get
  \begin{multline}
\label{Eq03GevreyS1}
|\lambda|\|A^\frac{1}{2}v\|^2\leq C_\delta \{ |\lambda| \|\theta\|^2 +|\lambda|^\frac{2-3\phi}{\phi}\{|\lambda|^\frac{2\phi-1}{\phi}\|F\|_\mathbb{H}\|U\|_\mathbb{H}+\|F\|^2_\mathbb{H}\}+\|F\|_\mathbb{H}\|U\|_\mathbb{H}\}\\
+C_\varepsilon|\lambda| \|A^\frac{1}{2}u\|^2+\varepsilon|\lambda|\|A^\frac{1}{2}v\|^2 \quad{\rm for}\quad \dfrac{1}{2}\leq\phi\leq 1.
\end{multline}
 for other hand, using \eqref{Item01Lemma3} and \eqref{Item01Lemma5A}, Lemmas \ref{Lemma3},  \ref{Lemma5A} respectively,  we get
 \begin{equation}\label{EstLambdAmedioV}
 |\lambda|\|A^\frac{1}{2}v\|^2\leq |\lambda|^\frac{2-3\phi}{\phi}\{|\lambda|^\frac{2\phi-1}{\phi}\|F\|_\mathbb{H}\|U\|_\mathbb{H}+\|F\|^2_\mathbb{H}\}\quad{\rm for}\quad \dfrac{1}{2}\leq \phi\leq 1.
 \end{equation}
  On the other hand,  from estimative \eqref{Item02Lemma3} of  Lemma \ref{Lemma3} and 
  \begin{multline*}
a^2|\lambda|^\frac{4\phi-2}{\phi}\|A^\frac{1}{2}u\|^2\leq a^2|\lambda|^2\|A^\frac{1}{2}u\|^2\leq C_\delta\{\|F\|_\mathbb{H}\|U\|_\mathbb{H}+\|F\|^2\}
  \\
 \leq  C_\delta\{|\lambda|^\frac{2\phi-1}{\phi}\|F\|_\mathbb{H}\|U\|_\mathbb{H}+\|F\|^2_\mathbb{H}\}\quad{\rm for}\quad 0\leq\phi\leq 1, 
  \end{multline*}
it is equivalent to
  \begin{equation}
  \label{EstLambdAmedioU}
  |\lambda|\|A^\frac{1}{2}u\|^2\leq |\lambda|^\frac{2-3\phi}{\phi}\{|\lambda|^\frac{2\phi-1}{\phi}\|F\|_\mathbb{H}\|U\|_\mathbb{H}+\|F\|^2_\mathbb{H}\}\quad{\rm for}\quad \dfrac{1}{2}\leq \phi\leq 1,
  \end{equation}
  then  from \eqref{EstLambdAmedioV}, \eqref{EstLambdAmedioU} and 
  \begin{equation*}
  a^2 |\lambda|\|A^\frac{1}{2}u+\alpha A^\frac{1}{2} v\|^2\leq a^2|\lambda|\|A^\frac{1}{2}u\|^2+a^2\alpha|\lambda|\|A^\frac{1}{2}v\|^2, 
  \end{equation*}
  we get
  \begin{equation}
  \label{stLambdAmedioUV}
  a^2|\lambda|\|A^\frac{1}{2}u+\alpha A^\frac{1}{2} v\|^2\leq |\lambda|^\frac{2-3\phi}{\phi}\{|\lambda|^\frac{2\phi-1}{\phi}\|F\|_\mathbb{H}\|U\|_\mathbb{H}+\|F\|^2_\mathbb{H}\}\quad{\rm for}\quad \dfrac{1}{2}\leq \phi\leq 1.
  \end{equation}
  Besides, from \eqref{Item01Lemma5A} of Lemma \ref{Lemma5A}
    \begin{multline*}
|\lambda|^\frac{4\phi-2}{\phi}\|\theta\|^2\leq |\lambda|^\frac{3\phi-1}{\phi}\|\theta\|^2\leq C_\delta|\lambda|^\frac{2\phi-1}{\phi}\|F\|_\mathbb{H}\|U\|_\mathbb{H}
  \\
 \leq  C_\delta\{|\lambda|^\frac{2\phi-1}{\phi}\|F\|_\mathbb{H}\|U\|_\mathbb{H}+\|F\|^2_\mathbb{H}\}\quad{\rm for}\quad 0\leq\phi\leq 1.
  \end{multline*}
   Equivalent to
  \begin{equation}
  \label{EstLambdATheta}
  |\lambda|\|\theta \|^2\leq |\lambda|^\frac{2-3\phi}{\phi}\{|\lambda|^\frac{2\phi-1}{\phi}\|F\|_\mathbb{H}\|U\|_\mathbb{H}+\|F\|^2_\mathbb{H}\}\quad{\rm for}\quad \dfrac{1}{2}\leq \phi\leq 1.
  \end{equation}
  From 
  \begin{equation*}
  |\lambda|\|v+\alpha w\|^2\leq |\lambda|\|v\|^2+\alpha|\lambda|\|w\|^2,
  \end{equation*}
 Finally,    adding the estimates \eqref{EstLambdAmedioV}, \eqref{EstLambdAmedioU}, \eqref{stLambdAmedioUV} and considering estimative \eqref {Eq02GevreyS1}, we get
 \begin{equation}
 \label{GevreyNovo}
 |\lambda|^\frac{4\phi-2}{\phi}\|U\|^2_\mathbb{H} \leq C_\delta\{|\lambda|^\frac{2\phi-1}{\phi}\|F\|_\mathbb{H}\|U\|_\mathbb{H}+\|F\|^2_\mathbb{H}\}\quad{\rm for}\quad \dfrac{1}{2}\leq \phi\leq 1.
 \end{equation}
  Therefore, finish the proof of inequality \eqref{EqEquiv1.6TS1}.
  
We finish the proof of this theorem.
\end{proof}
\subsection{Lack  Gevrey Class and  Analiticity}
\begin{theorem}\label{TLackGevrey}
The semigroup $S(t)=e^{t\mathbb{B}}$  does not support Gevrey classes and is not analytic when $\phi\in [0,\frac{1}{2}]$.
\end{theorem}
 \begin{proof}
Since the operator $A$  is strictly positive,  selfadjoint and it has compact resolvent, its spectrum is constituted by positive eigenvalues $(\sigma_n)$ such that $\sigma_n\con \infty$ as $n\con \infty$. For $n\in \N$ we denote with $e_n$ an unitary $D(A^0)$-norm eigenvector associated to the eigenvalue $\sigma_n$, that is,
\begin{equation}\label{auto-10}
Ae_n=\sigma_ne_n,\quad \|e_n\|_{\mathfrak{D}(A^0)}=\|e_n\|=1,\quad n\in\N.
\end{equation}
Considering  the eigenvalues and eigenvectors of the operator $A$ as in \eqref{auto-10}.  Let $F_n=(0,0,-\frac{e_n}{\alpha}, 0 )\in \mathbb{H}$. The solution $U_n=(u_n,v_n,w_n, \theta_n)$ of the system $(i\lambda_n I-\mathbb{B})U_n=F_n$ satisfies $v_n=i\lambda_n u_n$,  $w_n=-\lambda_n^2u_n$,  $A^\psi e_n=\sigma_n^\psi e_n$  for $\psi\in \mathbb{R}$ and the following equations
\begin{eqnarray*}
 -i\alpha\lambda^3_n  u_n+a^2Au_n+ia^2\beta\lambda_n Au_n-\lambda^2_nu_n-\eta A^\phi \theta_n&=& -e_n,\\
i\eta\lambda_n A^\phi u_n-\alpha\eta\lambda^2_nA^\phi u_n+ i\lambda_n\theta_n +A\theta_n& = & 0.
\end{eqnarray*}
Equivalently, 
\begin{eqnarray*}
\{ i\big[ \alpha\lambda^3_n I -a^2\beta\lambda_nA \big]+   \lambda^2_nI-a^2A\}u_n+\eta A^\phi\theta_n&=&e_n,\\
\{i\eta\lambda_nA^\phi -\alpha\eta\lambda^2_n A^\phi\}u_n+ \{ i\lambda_nI +A\}\theta_n& = & 0.
\end{eqnarray*}
 Let us see whether this system admits solutions of the form
 \begin{equation*}
    u_n=\mu_n e_n,\quad \theta_n=\nu_n e_n,
 \end{equation*}
for some complex numbers $\mu_n$ and $\nu_n$. Then, the numbers $\mu_n$, $\nu_n$ should satisfy the algebraic system
\begin{eqnarray}\label{eq01system6}
\{ i\big[ \alpha\lambda^3_n-a^2\beta\lambda_n\sigma_n]+\lambda_n^2- a^2\sigma_n\}\mu_n+\eta \sigma_n^\phi\nu_n&=& 1,\\
 \label{eq02system6}
\{ i\eta\lambda_n \sigma_n^\phi -\alpha\eta\lambda^2_n\sigma_n^\phi\}\mu_n+ \{ i\lambda_n +\sigma_n\}\nu_n& = & 0.
\end{eqnarray}
At this point,  we introduce the numbers
\begin{equation}\label{Numbers6}
\sigma_n:=\bigg[\dfrac{\alpha}{a^2\beta}\bigg]\lambda_n^2. 
\end{equation} 
Then
\begin{multline}\label{MuLambda6}
 |\sigma_n|\approx|\lambda_n|^2, \quad \lambda_n^2-a^2\sigma_n=\bigg[\frac{\beta-\alpha}{\beta}\bigg]\lambda_n^2, \quad \eta\sigma_n^\phi=\bigg[\dfrac{\eta\alpha^\phi}{a^{2\phi}\beta^\phi}\bigg]\lambda_n^{2\phi}\\
   i\eta\lambda_n\sigma_n^\phi-\alpha\eta\lambda_n^2\sigma_n^\phi =i\bigg[\dfrac{\eta\alpha^\phi}{a^{2\phi}\beta^\phi}\bigg]\lambda_n^{1+2\phi}-\bigg[\dfrac{\alpha^{1+\phi}\eta}{a^{2\phi}\beta^\phi}\bigg]\lambda_n^{2+2\phi} \\ {\rm and} \quad i\lambda_n+\sigma_n= i\lambda_n+\bigg[\dfrac{\alpha}{a^2\beta}\bigg]\lambda_n^2.
\end{multline}
Using identities of \eqref{Numbers6} and \eqref{MuLambda6} in system \eqref{eq01system6}-\eqref{eq02system6}, we obtain
\begin{eqnarray}\label{eq03system6}
\bigg\{ \dfrac{\beta-\alpha}{\beta}\lambda_n^2\bigg\}\mu_n+\bigg\{\dfrac{\eta\alpha^\phi}{a^{2\phi}\beta^\phi}\lambda_n^{2\phi}\bigg\}\nu_n&=& 1,\\
 \label{eq04system6}
\bigg\{i\dfrac{\eta\alpha^\phi}{a^{2\phi}\beta^\phi}\lambda_n^{1+2\phi}-\dfrac{\alpha^{1+\phi}\eta}{a^{2\phi}\beta^\phi}\lambda_n^{2+2\phi}\bigg \}\mu_n+ \bigg\{ i\lambda_n+\dfrac{\alpha}{a^2\beta}\lambda_n^2 \bigg \}\nu_n& = & 0.
\end{eqnarray}
Then
\begin{multline}\label{eq05system6}
\Delta=\left| \begin{array}{cc}
  \dfrac{\beta-\alpha}{\beta}\lambda_n^2  & \dfrac{\eta\alpha^\phi}{a^{2\phi}\beta^\phi}\lambda_n^{2\phi}\\
i\dfrac{\eta\alpha^\phi}{a^{2\phi}\beta^\phi}\lambda_n^{1+2\phi}-\dfrac{\alpha^{1+\phi}\eta}{a^{2\phi}\beta^\phi}\lambda_n^{2+2\phi}& 
   i\lambda_n+\dfrac{\alpha}{a^2\beta}\lambda_n^2
\end{array}\right  |\\
=\bigg[ \dfrac{\alpha(\beta-\alpha)}{a^2\beta^2}\bigg]\lambda^4_n+\bigg[\dfrac{\alpha^{2\phi+1}\eta^2}{a^{4\phi}\beta^{2\phi}}\bigg]\lambda_n^{2+4\phi}+i\bigg[ \dfrac{\beta-\alpha}{\beta}\lambda_n^3-\dfrac{\eta^2\alpha^{2\phi}}{a^{4\phi}\beta^{2\phi}}\lambda_n^{4\phi+1}\bigg]
\end{multline}
Therefore: \\
   For $\phi\in [0,\frac{1}{2}]$ and  $\beta-\alpha>0$,  we have
\begin{equation}\label{eq06system6}
|\Delta|\approx
|\lambda_n|^{\max\{4,3\}}=|\lambda_n|^4 \quad{\rm for}\quad    0\leq\phi \leq \dfrac{1}{2}.
\end{equation}
Besides:
\begin{equation}\label{eq15system6}
|\Delta_{\mu_n}|=\bigg|\left| \begin{array}{cc}
 1  & \dfrac{\eta\alpha^\phi}{a^{2\phi}\beta^\phi}\lambda_n^{2\phi}\\
0& 
   i\lambda_n+\dfrac{\alpha}{a^2\beta}\lambda_n^2
\end{array}\right  |\bigg|
=\big  | i\lambda_n+\dfrac{\alpha}{a^2\beta}\lambda_n^2\big |\approx |\lambda_n|^2\quad {\rm for}\quad 0\leq\phi\leq \dfrac{1}{2}.
\end{equation}
Then
\begin{equation}\label{eq16system6}
|\mu_n|=\bigg |\dfrac{ \Delta_{\mu_n}}{\Delta} \bigg|\approx
|\lambda_n|^{-2} \quad {\rm for} \quad  0\leq\phi \leq \dfrac{1}{2}.
\end{equation}
From the definition of $\|U\|_\mathbb{H}$ and estimative \eqref{eq16system6}, for $K>0$ we have
  \begin{equation}\label{eq19system6}
 \|U_n\|_\mathbb{H}\geq K\|A^\frac{1}{2}v_n\|=|\sigma_n|^\frac{1}{2}|i\lambda_n||\mu_n|
  \approx K |\lambda_n|^0 \qquad{\rm for} \qquad  0\leq\phi \leq \dfrac{1}{2}
 \end{equation}
 and 
 \begin{equation}\label{eq20system6}
|\lambda_n| \|U_n\|_\mathbb{H}\geq K |\lambda_n|^1 \qquad {\rm for} \qquad 0\leq\phi \leq \dfrac{1}{2}.
 \end{equation}
\begin{remark}
\begin{enumerate}
\item[I)] If there is a Gevrey class for $S(t)$ when $\phi\in [0,\frac{1}{2}]$, there must exist a $\Psi\in (0,1)$ such that the identity 
 \begin{equation}\label{LackGevrey}
    \lim\limits_{|\lambda_n|\to\infty} \sup |\lambda_n |^\Psi ||(i\lambda I-\mathbb{B})^{-1}||_{\mathcal{L}( \mathbb{H})} < \infty.
    \end{equation}
 is verified.   But,     multiplying both sides of the inequality  \eqref{eq19system6} by $|\lambda_n|^\Psi$  for  $0<\Psi<1$, we have 
  \begin{equation}\label{eq19system6A}
|\lambda_n|^\Psi \|U_n\|_\mathbb{H}\geq K |\lambda_n|^\Psi \qquad{\rm for} \qquad  0\leq\phi \leq \dfrac{1}{2},
 \end{equation}
then,    $|\lambda_n|^\Psi\|U_n\|_\mathbb{H}\to\infty$  approaches infinity as $|\lambda_n|\to\infty$. Therefore   for $\phi$ in $[0, \frac{1}{2}]$ the \eqref{LackGevrey} condition fails.   Consequently the semigroup $S(t)$ does not admit a Gevrey class for $ \phi\in [0,\frac{1}{2}]$.  
 
\item[II)] From inequality  \eqref{eq20system6},  $\|U_n\|_\mathbb{H}\to\infty$  approaches infinity as $|\lambda_n|\to\infty$. Therefore   for $\phi$ in $[0, \frac{1}{2}]$, $S(t)$ does not satisfy the condition \eqref{Analyticity} of Theorem \ref{LiuZAnalyticity}, so $S(t)$ is not analytic when $\phi\in[0,\frac{1}{2}]$.
\end{enumerate}
\end{remark}
\end{proof}

\end{document}